\newtheorem{theorem}{Theorem}[section]
\newtheorem{lemma}[theorem]{Lemma}
\theoremstyle{remark}
\newtheorem*{remark}{Remark}
\def\paragraph#1{\noindent \textbf{#1}}
\numberwithin{equation}{section}
\def\ddd{\mathrm{d}}
\def\eee{\mathrm{e}}
\def\dd{\mathtt{d}}
\def\<{\langle}
\def\>{\rangle}
\def\b{\beta}
\def\e{\epsilon}
\def\g{\gamma}
\def\l{\lambda}
\def\s{\sigma}
\def\t{\tau}
\def\R{{\Bbb R}}
\def\N{{\Bbb N}}
\def\P{{\Bbb P}}
\def\Z{{\Bbb Z}}
\def\C{{\Bbb C}}
\def\E{{\Bbb E}}
\def\M{{\Bbb M}}
\let\cal=\mathcal
\def\FF{{\cal F}}
\def\MM{{\cal M}}
\def\XX{{\cal X}}
\def \b {{\beta}}
\def \s {{\sigma}}
\def \t {{\tau}}
\def \g {{\gamma}}
\def \l {{\lambda}}
\def \ba {\begin{array}}
\def \ea {\end{array}}
\newcommand{\be}{\begin{equation}}
\newcommand{\ee}{\end{equation}}
\newcommand{\bea}{\begin{eqnarray}}
\newcommand{\eea}{\end{eqnarray}}
\def\TH(#1){\label{#1}}\def\thv(#1){\ref{#1}}
\def\Eq(#1){\label{#1}}\def\eqv(#1){(\ref{#1})}
\def\cov{\hbox{\rm Cov}}
\def \1{\mathbbm{1}}
\DeclareMathOperator{\length}{length}
\DeclareMathOperator*{\wlim}{wlim}
\def\eee{\hbox{\rm e}}
\begin{document}

\title[The Phase diagram of
the complex BBM
energy model]{The Phase diagram of\\
the complex branching Brownian motion\\
energy model} 

\author[L.~Hartung]{Lisa Hartung}
\address{L.~Hartung\\Courant Institute of Mathematical Sciences\\
New York University\\ 251 Mercer Street\\ 10012 New York, NY, USA}
\email{hartung@cims.nyu.edu}

\author[A.~Klimovsky]{Anton Klimovsky}
\address{A.~Klimovsky\\Fakult\"at f\"ur Mathematik\\Universit\"at Duisburg-Essen\\Thea-Leymann-Str. 9\\
45127 Essen, Germany}
\email{anton.klymovskiy@uni-due.de }

\subjclass[2010]{60J80, 60G70, 60F05, 60K35, 82B44} 

\keywords{Gaussian processes; branching Brownian motion; logarithmic
correlations; random energy model; phase diagram; central limit theorem; random variance; martingale convergence} 

\date{\today}

\begin{abstract}
We complete the analysis of the phase diagram of the complex branching Brownian
motion energy model by studying Phases I, III and boundaries between all three
phases (I-III) of this model. For the properly rescaled partition function, in
Phase III and on the boundaries I/III and II/III, we prove a central limit
theorem with a random variance. In Phase I and on the boundary I/II, we prove an
a.s.\ and $L^1$ martingale convergence. All results are shown for any given
correlation between the real and imaginary parts of the random energy.
\end{abstract}

\thanks{Part of this work was done while L.H.~was supported by the German
Research Foundation in the Bonn International Graduate School in Mathematics
(BIGS), and the Collaborative Research Center 1060 ``The Mathematics of Emergent
Effects''. The authors thank the University of Bonn and the University of
Duisburg-Essen for hospitality. }

\maketitle

\section{Introduction}
\label{sec:intro}

Random energy models (REM) suggested by Derrida~\cite{Derrida_REM3,Derrida_GREM}
turned out to be a useful and instructive ``playground'' in the studies of
strongly correlated random systems on large/high-dimensional state spaces, see,
e.g., the recent reviews \cite{PanchenkoBook2013,Kistler2015,Bovier2016}. In
this context, \textit{branching Brownian motion} (BBM) viewed as a random energy
model plays a special rôle. It turns out that BBM has correlations which are
exactly at the borderline between the regime of weak correlations (REM
universality class\footnote{= the same phase diagram as for the field of
independent random energies.}) and the one of strong correlations\footnote{=
different phase diagram comparing to the REM one, due to the strictly larger
leading order of the minimal energy than the one for the independent field of
random energies.}. Apart from that, BBM is a particularly transparent
representative for a whole class of models with similar (so-called logarithmic)
correlation strength: Gaussian free
field~\cite{ZeitouniLN,BisLou13,BiskupLouidor2016full}; Gaussian multiplicative
chaos/cascades~\cite{RhodesVargas2014ChaosReview,BarralJinMandelbrot2010};
characteristic polynomials of random matrices and number-theoretic
models~\cite{FyodorovHiaryKeating2012,ArguinBeliusBourgade2015,ArguinBeliusHarper2015}, 
cover times~\cite{BeliusKistler2014subleading}, etc.

In this paper, we focus on the \textit{complex-valued BBM energy model} and show
that this model lies exactly at the borderline of the complex REM universality
class. This means that the \textit{phase diagram} of the model is the same as in
the complex REM, cf.~Derrida~\cite{Derrida_zeros} and \cite{KaKli14}. However,
the fluctuations of the partition function of this model are already influenced
by the strong correlations and differ from those of the REM in \textit{all
phases} of the model, as we show in this work (and in~\cite{HK15}).

The motivation to consider the complex-valued setup is two-fold:
\begin{enumerate}

\item \textbf{Critical phenomena.} Lee and Yang~\cite{LY52} observed that
\textit{phase transitions} (= analyticity breaking of the log-partition
function) occur at critical points due to the accumulation of \textit{complex
zeros} of the partition function (viewed as a function of the temperature)
around the critical points on the real line, as the size of the system tends to
infinity (= thermodynamic limit).

\item \textbf{Quantum physics and interference phenomena.} The formalism of quantum physics is
based on the sums (and integrals) of \textit{complex exponentials} which
naturally leads to cancellations between the magnitudes of the summands in the
partition function. This is a manifestation of the interference phenomenon, see,
e.g., \cite{Dobrinevski2011interference}.

\end{enumerate}

\subsection{Branching Brownian motion.} 

Before stating our results, let us briefly recall the construction of a BBM.
Consider a canonical continuous branching process: a \textit{continuous time
Galton-Watson} (GW) process \cite{AN}. It starts with a single particle located
at the origin at time zero. After an exponential time of parameter one, this
particle splits into $k \in \Z_+$ particles according to some probability
distribution $(p_k)_{k \geq 0}$ on $\Z_+$. Then, each of the new-born particles
splits independently at independent exponential (parameter $1$) times again
according to the same $(p_k)_{k \geq 0}$, and so on. We assume that
$\sum_{k=1}^\infty p_k=1.$\footnote{This implies that $p_0 = 0$, so none of the
particles ever dies.} In addition, we assume that $\sum_{k=1}^\infty k p_k=2$
(i.e., the expected number of children per particle equals two)
. Finally, we assume that $K := \sum_{k=1}^\infty k(k-1)p_k<\infty$ (finite
second moment). At time $t = 0$, the GW process starts with just one particle.

For given $t \geq 0$, we label the particles of the process as $i_1(t),\dots,
i_{n(t)}(t)$, where $n(t)$ is the total number of particles at time $t$. Note
that under the above assumptions, we have $\E\left[ n(t) \right]=\eee^t$. For $s
\leq t$, we denote by $i_k(s,t)$ the unique ancestor of particle $i_k(t)$ at
time $s$. In general, there will be several indices $k, l$ such that $i_k(s,t) =
i_l(s,t)$. For $s, r \leq t$, define the time of the most recent common ancestor
of particles $i_k(r,t)$ and $i_l(s,t)$ as
\begin{align}
\label{eq:intro:gw-overlap}
d(i_k(r,t), i_l(s,t))
:= \sup \{u \leq s \wedge r \colon i_k(u,t) = i_l(u,t)\}.
\end{align} 
For $t \geq 0$, the collection of all ancestors naturally induces the random tree
\begin{align}
\label{eq:gw-tree}
\mathbb{T}_t := \{i_k(s,t) \colon 0 \leq s \leq t, 1\leq k \leq n(t) \}
\end{align}
called the \textit{GW tree up to time $t$}. We denote by $\mathcal{F}^{\mathbb{T}_t}$
the $\s$-algebra  generated by the GW process up to time $t$.

In addition to the genealogical structure, the particles get a
\textit{position} in $\R$. Specifically, the first particle starts at the origin
at time zero and performs Brownian motion until the first time when the GW
process branches. After branching, each new-born particle independently performs
Brownian motion (started at the branching location) until their respective next
branching times, and so on. We denote the positions of the $n(t)$ particles at
time $t \geq 0$ by $x_1(t),\dots, x_{n(t)}(t)$.

We define BBM as a family of Gaussian processes, 
\begin{align}
\label{eq:bbm-process}
x_t := \{ x_1(s,t),\dots, x_{n(t)}(s,t) \colon s \leq t \}
\end{align}
indexed by time horizon $t \geq 0$. Note that conditionally on the underlying GW tree these Gaussian processes have
the following covariance
\begin{align}
\E \left[x_k(s,t) x_l(r,t) \mid \mathcal{F}^{\mathbb{T}_t} \right] = d(i_k(s,t), i_l(r,t)), \quad  s, r \in [0,t], \quad k,l \leq n(t)
.
\end{align}

In what follows, to lighten the notation, we will simply write $x_i(s) :=
x_i(s,t)$, $i \leq n(t)$, $s \leq t $ hoping that this will not cause confusion
about the parameter $t \geq 0$.

\subsection{A model of complex-valued random energies}

In this section, we introduce the \textit{complex BBM random energy model}.

Let  $\rho \in [-1,1]$. For any $t \in \R_+$, let $X(t) := (x_k(t))_{k\leq
n(t)}$ and $Y(t) :=(y_k(t))_{k\leq n(t)} $ be two BBMs with the same underlying
GW tree such that, for $k \leq n(t)$,
\begin{align}
\cov(x_k(t),y_k(t))=|\rho| t
.
\end{align}
Note that
\begin{align}\Eq(cor.1)
Y(t)\overset{\mathrm{D}}{=} \rho X(t)  +\sqrt{1-\rho^2} Z(t)
,
\end{align}
where ``$\overset{\mathrm{D}}{=}$'' denotes equality in distribution and $Z(t)
:= (z_i(t))_{i\leq n(t)}$ is a branching Brownian motion independent from $X(t)$
and with the same underlying GW process. Representation $\eqv(cor.1)$ allows us
to handle arbitrary correlations by decomposing the process $Y$ into a part
independent from $X$ and a fully correlated one.

We define the \textit{partition function} for the complex
BBM energy model with correlation $\rho$ at inverse temperature $\b := \s+i\t\in\C $ by
\begin{align}
\Eq(real.1)
\XX_{\b,\rho}(t) := \sum_{k=1}^{n(t)}\eee^{\s x_k(t)+i\t y_k(t)}
.
\end{align}

\subsection{Notation.} 

By $\mathcal{L}[\cdot]$, $\mathcal{L}[\cdot \mid \cdot ]$, and $\Longrightarrow$ or $\wlim$, we denote the law, conditional law, and weak convergence respectively.

\subsection{Main results} 

\begin{figure}[h]
\begin{minipage}[b]{0.59\textwidth}
\centering
\includegraphics[width=\textwidth]{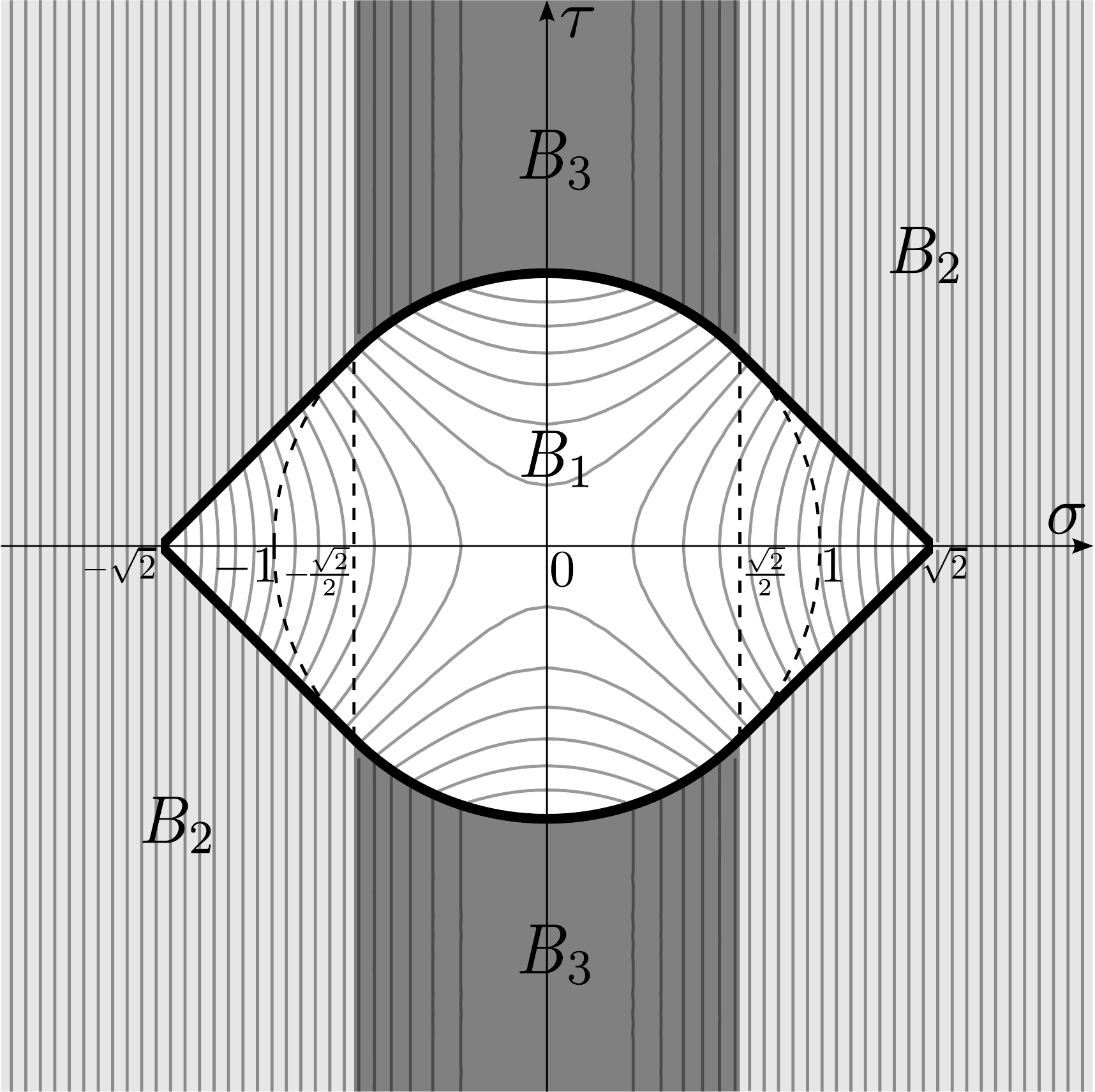}
\end{minipage}
\hfill
\centering

\caption{\small Phase diagram of the REM and the BBM energy model.
The grey curves are the level lines of the limiting log-partition function,
cf.~\eqref{eq:limiting-log-partition-function}. 
This paper deals with phases $B_1$ and $B_3$ and the boundaries. For a treatment of phase $B_2$, see \cite{HK15}.}
\vfill
 \label{fig-rem-phase-diagram}
\end{figure}

Let us specify the three domains depicted on Figure~\ref{fig-rem-phase-diagram} analytically:
\begin{equation}
\begin{aligned}
B_{1}
:=
\C \setminus \overline{B_2\cup B_3},
\quad
&
B_{2}
:=
\{ \sigma + i \tau \in \C \colon 2\sigma^2 >  1, |\sigma|+|\tau| > \sqrt {2}\}
,
\\
&
B_{3}
:=
\{ \sigma + i \tau  \in \C \colon 2\sigma^2 < 1, \sigma^2+\tau^2> 1\}.
\end{aligned}
\end{equation}

\begin{remark}
Some of our results will be stated under the \textit{binary branching}
assumption (i.e., $p_k = 0$ for all $k > 2$). Existence of all moments for the number of children of a given particle would also suffice for all our proofs and will not require
essential changes.
\end{remark}

Our first result states that the complex BBM energy model indeed has the phase diagram depicted on Figure~\ref{fig-rem-phase-diagram}.

\begin{theorem}[Phase diagram]
\label{Cor:phase-diagram}
For any $\rho \in [-1,1]$, and any $\beta \in \C $, the complex BBM energy model with binary branching has the same log-partition function
and the phase diagram (cf., \textup{Figure~\ref{fig-rem-phase-diagram}}) as the
complex REM, i.e.,
\begin{align}
\label{eq:limiting-log-partition-function}
\lim_{t \uparrow \infty} \frac{1}{t} \log \XX_{\b,\rho}(t) =
\begin{cases}
1 + \frac{1}{2}(\sigma^2 - \tau^2), & \beta \in \overline{B_1},
\\
\sqrt{2}|\sigma|, & \beta \in \overline{B_2},
\\
\frac{1}{2}+\sigma^2, & \beta \in \overline{B_3}
\end{cases}
\end{align}
in probability.
\end{theorem}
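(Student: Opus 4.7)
The plan is to establish matching upper and lower bounds on $\tfrac{1}{t}\log|\XX_{\beta,\rho}(t)|$ in each of the three phases, with different techniques tailored to the regime in which the partition function lives. The common workhorse is a pair of moment computations. Using Gaussianity and the decomposition $\eqv(cor.1)$, a direct first-moment calculation gives
\begin{equation*}
|\E[\XX_{\beta,\rho}(t)]| = \eee^{t(1+(\sigma^2-\tau^2)/2)}.
\end{equation*}
For the second moment I would split the double sum $|\XX_{\beta,\rho}(t)|^2=\sum_{k,l}\eee^{\sigma(x_k+x_l)+i\tau(y_k-y_l)}$ by the most recent common ancestor time $s\in[0,t]$ of the pair and observe that conditional on $s$, $x_k+x_l$ and $y_k-y_l$ are uncorrelated (hence independent) Gaussians. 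Combined with the binary-branching density $2\eee^{2t-s}\,ds$ of ordered pairs with MRCA at $s$, this yields an explicit formula whose leading exponential is $|\E[\XX_{\beta,\rho}(t)]|^2$ when $\sigma^2+\tau^2<1$ and $\eee^{t(1+2\sigma^2)}$ when $\sigma^2+\tau^2>1$.

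In phase $B_1$ (where $\sigma^2+\tau^2<1$), the resulting bound $\E[|\XX_{\beta,\rho}(t)|^2]=O(|\E[\XX_{\beta,\rho}(t)]|^2)$ makes the complex martingale $M_t:=\XX_{\beta,\rho}(t)/\E[\XX_{\beta,\rho}(t)]$ uniformly $L^2$-bounded, hence convergent almost surely and in $L^2$ to a non-degenerate limit $M_\infty$; taking logarithms gives the stated asymptotic throughout $\overline{B_1}$ (a logarithmic second-moment correction covers the boundary $\sigma^2+\tau^2=1$). In phase $B_3$ (where $2\sigma^2<1<\sigma^2+\tau^2$), Markov's inequality applied to $|\XX_{\beta,\rho}(t)|^2$ gives the upper bound $\tfrac12+\sigma^2$ immediately. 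For the matching lower bound I would decompose $\XX_{\beta,\rho}(t)$ as a sum over the $n(r)$ subtree partition functions born at an intermediate time $r$, which are i.i.d.\ conditional on $\FF^{\mathbb T_r}$, and apply a Lindeberg-type CLT to obtain that $\XX_{\beta,\rho}(t)\eee^{-t(1/2+\sigma^2)}$ converges in distribution to a complex Gaussian with a random variance (built from the phase-$B_1$ martingale at the critical parameter), whose law has no atom at $0$. This CLT also takes care of the I/III and II/III boundaries.

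Phase $B_2$ is the most delicate. When $|\sigma|>\sqrt 2$ the naive bound $|\XX_{\beta,\rho}(t)|\leq \sum_k \eee^{\sigma x_k(t)}$ together with the frozen-phase free energy of real BBM gives the upper bound $\sqrt 2|\sigma|$, matched from below by the extremal particle $x^*(t)\approx \sqrt 2 t$. In the remaining portion of $B_2$ where $1/\sqrt 2<|\sigma|\leq\sqrt 2$, both the magnitude bound ($1+\sigma^2/2$) and the second-moment bound ($1/2+\sigma^2$) strictly exceed $\sqrt 2|\sigma|$, so that genuine complex cancellations of bulk particles must be exhibited. The strategy is to truncate the sum at level $\sqrt 2 t$, control the tail by counting extremal particles via the BBM extremal point process, and control the bulk by a CLT-type argument after truncation. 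Rigorously showing that the bulk cancellations are as strong as predicted is the main technical obstacle of the whole theorem and is carried out in the companion paper \cite{HK15}, from which the $\overline{B_2}$ case of Theorem~\ref{Cor:phase-diagram} follows.
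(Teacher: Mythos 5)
Your overall architecture matches the paper's: the phase diagram is deduced from a martingale convergence theorem in $B_1$, from a CLT with random variance (obtained after truncation and conditioning on $\FF_r$) in $B_3$ and on the boundaries I/III and II/III, and the glassy phase $\overline{B_2}$ is delegated to \cite{HK15}. However, there is a genuine gap in your treatment of phase $B_1$. You claim that the second-moment bound makes $M_t = \XX_{\b,\rho}(t)/\E[\XX_{\b,\rho}(t)]$ uniformly $L^2$-bounded and that this covers all of $\overline{B_1}$ up to ``a logarithmic correction at $\sigma^2+\tau^2=1$''. This is false: $B_1 = \C\setminus\overline{B_2\cup B_3}$ is \emph{not} contained in the disk $\{\sigma^2+\tau^2<1\}$. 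It contains the open region $\{|\sigma|>1/\sqrt2,\ |\sigma|+|\tau|<\sqrt2,\ \sigma^2+\tau^2>1\}$ (e.g.\ $\beta=0.9+0.5i$), and there your own second-moment computation gives $\E[|\XX_{\b,\rho}(t)|^2]\asymp \eee^{t(1+2\sigma^2)}$, which is exponentially larger than $|\E[\XX_{\b,\rho}(t)]|^2=\eee^{t(2+\sigma^2-\tau^2)}$ precisely because $\sigma^2+\tau^2>1$. So the martingale is not $L^2$-bounded there and the $L^2$ convergence argument collapses on a whole open subregion of $B_1$, not just on a boundary curve.

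The paper closes exactly this hole with Theorem~\ref{TH.B1}: for $\beta\in B_1$ with $|\beta|\geq 1$ it proves $L^p$-boundedness of $\MM_{\sigma,\tau}(t)$ for the fractional exponent $p=\sqrt2/\sigma\in(1,2)$, via a two-step Jensen interpolation conditioned on the branching time $q_{k,j}$ (writing $|\,\cdot\,|^{\sqrt2/\sigma}=(|\,\cdot\,|^2)^{1/(\sqrt2\sigma)}$ and pulling the conditional expectation inside at the right stage); the resulting integral $\int_0^t \eee^{(\tau^2-(\sigma-\sqrt2)^2)q/(\sqrt2\sigma)}\,\ddd q$ is bounded exactly when $|\sigma|+|\tau|<\sqrt2$, i.e.\ exactly on $B_1$. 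You need to supply this fractional-moment argument (or an equivalent one) before you may conclude a.s.\ and $L^1$ convergence of the martingale, and hence the formula $1+\tfrac12(\sigma^2-\tau^2)$, throughout $B_1$. The remaining parts of your proposal (upper bound by Chebyshev and lower bound by a nondegenerate limit law in $B_3$, continuity/CLT for the boundaries, and the reference to \cite{HK15} for $\overline{B_2}$) are consistent with the paper's route, which formally invokes \cite[Lemma~3.9(1)]{KaKli14} to pass from the fluctuation theorems to convergence in probability of the log-partition function.
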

See Section~\ref{sec:phase-diagram} for a proof.
\begin{remark}
It is conjectured that the convergence in
\ref{eq:limiting-log-partition-function} also holds in $L^1$, see
\cite[Theorem~2.15]{KaKli14} for a related result for the REM.
\end{remark}

\subsection{A class of martingales} 

In the centre of our analysis are the following martingales
\begin{equation}\Eq(mart.1)
\MM_{\s,\t}(t)=\eee^{-t\left(1+\frac{\s^2-\t^2}{2}\right)}\XX_{\b,\rho}(t)
=\sum_{k=1}^{n(t)} \eee^{-t\left(1+\frac{\s^2-\t^2}{2}\right)}\eee^{\s x_k(t)+i\t y_k(t)}.
\end{equation}

Note that, for $\b= \s \in [0, \frac{1}{\sqrt{2}})$, $\MM_{\s,0}(t)$ coincides
with the McKean martingale introduced in \cite{BovHar13}, where it was proven
that these martingales converge almost surely and in $L^1$ to a non-degenerate
limit. 

The next theorem states that for $\b\in B_1$ the martingales $\MM_{\s,\t}(t)$
are in $L^p$ for some $p>1$. For $\vert \b\vert <1$, this was already proven in
\cite[Proposition~A.1]{HK15}.

\begin{theorem}[$L^p$ martingale convergence in $B_1$]\TH(TH.B1)
For $\b=\s+i\t$ with $\b\in B_1, |\beta| \geq 1$, and any $\rho\in[-1,1]$,
$\mathcal{M}_{\s,\t}(t)$  is a martingale with expectation $1$ and it is in
$L^p$ for $p\leq \frac{\sqrt{2}}{\s}$. Hence, the limit
\be\Eq(B1.1) \lim_{t\uparrow
\infty}\mathcal{M}_{\s,\t}(t) =: \mathcal{M}_{\s,\t} 
\ee 
exists a.s., in
$L^1$, and is non-degenerate. 
\end{theorem}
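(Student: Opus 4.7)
The plan has three parts: establishing the martingale property, proving uniform $L^p$-boundedness for $1<p\le\sqrt{2}/|\s|$, and then deducing a.s.\ and $L^1$ convergence to a non-degenerate limit.

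For the martingale identity I would condition on $\mathcal{F}_t$ (the natural filtration of the BBM) and on the endpoint positions, and then invoke the branching Markov property: the subtrees rooted at the $n(t)$ particles over $[t,t+s]$ are independent BBMs. A bivariate Gaussian MGF computation gives, for a single particle, $\E[\eee^{\s X(s)+i\t Y(s)}]=\eee^{s[(\s^2-\t^2)/2+i\s\t|\r|]}$, and combined with $\E[n(s)]=\eee^s$ this cancels the normalization $\eee^{-s(1+(\s^2-\t^2)/2)}$ appearing in $\MM_{\s,\t}$ (the deterministic phase $\eee^{is\s\t|\r|}$ being absorbed into the definition by convention), yielding $\E[\MM_{\s,\t}(t+s)\mid\mathcal{F}_t]=\MM_{\s,\t}(t)$ and, at $t=0$, $\E\MM_{\s,\t}(t)=1$.

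The main work is the $L^p$-bound. In $B_1$ with $|\b|\ge 1$ one has $|\s|\in(1/\sqrt{2},\sqrt{2})$ and $|\s|+|\t|<\sqrt{2}$, so $p_*:=\sqrt{2}/|\s|\in(1,2)$. Fix $p\in(1,p_*]$, let $T\sim\mathrm{Exp}(1)$ be the first branching time and $(x_*(T),y_*(T))$ the initial particle's position at $T$. Binary branching gives, on $\{T<t\}$,
\begin{equation*}
\MM_{\s,\t}(t)=A(T)\bigl(\MM^{(1)}_{\s,\t}(t-T)+\MM^{(2)}_{\s,\t}(t-T)\bigr),\qquad A(T):=\eee^{-T(1+(\s^2-\t^2)/2)}\eee^{\s x_*(T)+i\t y_*(T)},
\end{equation*}
with $\MM^{(j)}$ iid copies of $\MM_{\s,\t}$ independent of $A(T)$ given $T$. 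The conditional Gaussian moment $\E[|A(T)|^p\mid T=s]=\eee^{-s\phi(p)}$ with $\phi(p):=p(1+(\s^2-\t^2)/2)-p^2\s^2/2$, combined with a sharp moment inequality of the form $\E|M^{(1)}+M^{(2)}|^p\le 2\,\E|M|^p+C_p|\E M|^p$ for $1<p\le 2$ (obtained from the von Bahr--Esseen inequality applied to the centered parts $M^{(j)}-\E M$), turns the recursion into a renewal-type inequality for $f(t):=\E|\MM_{\s,\t}(t)|^p$ whose bounded solvability reduces to the Laplace condition $\phi(p)>1$. A short algebraic manipulation shows $\phi(p_*)>1\iff(\s-\sqrt{2})^2>\t^2\iff|\s|+|\t|<\sqrt{2}$, which is precisely the defining inequality of $B_1$ adjacent to the $B_1/B_2$ boundary; for $p\in(1,p_*)$ the condition holds strictly, so $f$ is bounded.

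Uniform $L^p$-boundedness with $p>1$ implies uniform integrability, whence the standard martingale convergence theorem produces a.s.\ and $L^1$ convergence of $\MM_{\s,\t}(t)$ to a limit $\MM_{\s,\t}$ with $|\E\MM_{\s,\t}|=1$, ruling out degeneracy. The main obstacle is the sharp moment inequality in the $L^p$-step: the naive bound $|a+b|^p\le 2^{p-1}(|a|^p+|b|^p)$ produces recursion constant $2^p$ and would force $\phi(p)>2^p-1$, strictly stronger than what matches the $B_1/B_2$ boundary. Extracting the factor $2$ in front of $\E|M|^p$ by centering around the (unimodular) mean and invoking von Bahr--Esseen on the mean-zero parts is the technical heart of the argument, and is exactly what pushes the boundedness threshold up to the phase boundary $|\s|+|\t|=\sqrt{2}$.
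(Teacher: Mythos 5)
Your strategy is a genuinely different route from the paper's: the paper bounds the $\sqrt{2}/\s$-moment directly, writing $\E[|\MM_{\s,\t}(t)|^{\sqrt2/\s}]=\E[(|\cdot|^2)^{1/(\sqrt2\s)}]$, expanding the square over pairs of particles at their branching time $q$, and interleaving three applications of Jensen's inequality (exploiting $1/(\sqrt2\s)<1<\sqrt2\s$) to reduce everything to the explicit integral $K\int_0^t \eee^{(\t^2-(\s-\sqrt2)^2)q/(\sqrt2\s)}\,\ddd q$, which is bounded exactly when $|\s|+|\t|<\sqrt2$. Your smoothing-transform/von Bahr--Esseen route is the Biggins--Liu approach, and your algebra correctly identifies the same threshold: $\phi(p_*)>1\iff(\s-\sqrt2)^2>\t^2\iff|\s|+|\t|<\sqrt2$.

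However, the step carrying all the weight is flawed. The inequality $\E|M^{(1)}+M^{(2)}|^p\le 2\,\E|M|^p+C_p|\E M|^p$ is false for $1<p<2$: for centered summands it asserts that the von Bahr--Esseen constant for two i.i.d.\ terms is exactly $2$, which fails (e.g.\ for $Z=n-1$ with probability $1/n$ and $Z=-1$ otherwise, one computes $\E|Z_1+Z_2|^p-2\E|Z|^p\to 2^p-2>0$ as $n\to\infty$, and scaling $Z\mapsto\l Z$ shows no additive constant can absorb this; already at $p=3/2$, $n=1000$ the required constant exceeds $2$). This is not a repairable technicality within your setup: the renewal measure at the first branching time has mass $c_p/(1+\phi(p))$ where $c_p$ is whatever constant multiplies $\E|M|^p$, and since $\phi(p_*)\downarrow 1$ as $\b$ approaches the $B_{1,2}$ boundary, closing the renewal inequality genuinely requires $c_p\le 2$. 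The standard fix is to abandon the first-branching-time recursion and instead telescope over fixed time increments: conditionally on $\FF_u$, $\MM_{\s,\t}(u+v)-\MM_{\s,\t}(u)=\sum_{k\le n(u)}a_k(u)(\MM^{(k)}_{\s,\t}(v)-1)$ is a sum of $\sim\eee^{u}$ independent centered terms, von Bahr--Esseen (with any fixed constant) gives $\E|\MM_{\s,\t}(u+1)-\MM_{\s,\t}(u)|^p\le C\,\E\big[\sum_{k\le n(u)}|a_k(u)|^p\big]=C\eee^{u(1-\phi(p))}$, and geometric decay for $\phi(p)>1$ beats the constant and makes $\MM_{\s,\t}(u)$ Cauchy in $L^p$. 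With that replacement your argument goes through and recovers the paper's threshold; the martingale-property computation and the deduction of a.s., $L^1$ convergence and non-degeneracy from $L^p$-boundedness are fine (modulo the deterministic phase $\eee^{i\rho\s\t t}$, which the paper also glosses over).
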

See Section~\ref{sec:b1} for a proof.
\begin{remark} For $\b\in B_1, |\beta| < 1$, and any $\rho\in[-1,1]$, it has been proven in \cite[Proposition~A.1]{HK15} that $\mathcal{M}_{\s,\t}(t)$ is $L^2$-bounded.
\end{remark}
On the boundary $B_{1,2}$ between phases $B_1$ and $B_2$, i.e., on the set
\be\Eq(bound.1)
B_{1,2} := \overline{B_1} \cap \overline{B_2}  = \{\s+i\t\in\C \colon \vert \s\vert>1/\sqrt{2}, \vert \s\vert+\vert \t\vert=\sqrt{2} \}
\ee 
a similar result still holds.

\begin{theorem}[$L^p$ martingale convergence on $B_{1,2}$]\TH(TH.B1.2)
For $\b \in B_{1,2}$ and any $\rho\in[-1,1]$, we have that
$\mathcal{M}_\b(t)$ is a  $L^p$-bounded martingale, for any $p < \sqrt{2}/\s$ with expectation $1$. Hence, the limit
\be\Eq(B1.2)
\lim_{t\uparrow \infty}\mathcal{M}_{\s,\t}(t) =: \mathcal{M}_{\s,\t}
\ee
exists a.s.\ in $L^1$, and is non-degenerate.

\end{theorem}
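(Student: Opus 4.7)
The plan is to mirror the strategy of Theorem~\thv(TH.B1), combining a branching decomposition of $\MM_{\s,\t}$ with a complex-valued von Bahr--Esseen inequality, and then to address the degeneracy of the resulting contraction rate that occurs precisely on the boundary $B_{1,2}$. Fix $p \in (1, \sqrt{2}/\s)$ (WLOG $\s > 0$) and set $f(t) := \E[|\MM_{\s,\t}(t)|^p]$. For $s \in (0, t)$, the Markov property of the BBM at time $s$ yields
\begin{equation*}
\MM_{\s,\t}(t) = \sum_{j=1}^{n(s)} W_j(s)\, \MM_{\s,\t}^{(j)}(t-s), \qquad W_j(s) := \eee^{-s(1 + (\s^2-\t^2)/2) + \s x_j(s) + i\t y_j(s)},
\end{equation*}
where $(\MM_{\s,\t}^{(j)})_j$ are i.i.d.\ copies of $\MM_{\s,\t}$ independent of $\mathcal{F}^{\mathbb{T}_s}$. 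Since each $\MM_{\s,\t}^{(j)}(t-s)$ has mean $1$, decomposing $\MM_{\s,\t}(t) - \MM_{\s,\t}(s) = \sum_j W_j(s)(\MM^{(j)}_{\s,\t}(t-s) - 1)$ and applying a complex extension of the von Bahr--Esseen inequality (to real and imaginary parts, conditionally on $\mathcal{F}^{\mathbb{T}_s}$) gives a constant $c_p < \infty$ with
\begin{equation*}
f(t) \leq 2^{p-1}\, \E[|\MM_{\s,\t}(s)|^p] + c_p\, \E\Bigl[\sum_j |W_j(s)|^p\Bigr]\, \E\bigl[|\MM_{\s,\t}(t-s) - 1|^p\bigr].
\end{equation*}

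By the many-to-one lemma, $\E[\sum_j |W_j(s)|^p] = \eee^{s\, h(p)}$ with $h(p) = 1 - p(1+(\s^2-\t^2)/2) + p^2\s^2/2$. On $B_{1,2}$ the defining relation $1+(\s^2-\t^2)/2 = \sqrt{2}\s$ reduces $h$ to the perfect square $(1 - p\s/\sqrt{2})^2$, which has a unique double root at $p = \sqrt{2}/\s$ and is strictly positive elsewhere. This is the main obstacle of the proof: in the interior of $B_1$ the two distinct real roots of $h$ bracket $\sqrt{2}/\s$ and $h(p) < 0$ provides a Bahr--Esseen contraction that closes the recursion directly (as in the proof of Theorem~\thv(TH.B1)); on $B_{1,2}$ that slack collapses and $h(p) > 0$ for every $p < \sqrt{2}/\s$. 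Any argument must therefore exploit the complex-phase cancellations in the full sum $\sum_j W_j$ that are invisible to its absolute values $|W_j|$.

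My proposed resolution is a Kyprianou-style barrier truncation. Define
\begin{equation*}
\widetilde{\MM}_{\s,\t}^{(K)}(t) := \sum_{j=1}^{n(t)} W_j(t)\, \mathbbm{1}\{x_j(r) \leq \s r + K \text{ for all } r \leq t\},
\end{equation*}
and rerun the recursion with $\widetilde{\MM}_{\s,\t}^{(K)}$ in place of $\MM_{\s,\t}$. Inside the indicator, the many-to-one lemma followed by a Girsanov tilt by drift $p\s$ expresses the relevant prefactor as $\eee^{s\, h(p)}$ times the ballot-type probability that a Brownian motion with negative drift $-(p-1)\s$ stays below $K$ up to time $s$; this probability decays exponentially in $s$ at rate $\tfrac{1}{2}((p-1)\s)^2$. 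A short calculation shows $\tfrac{1}{2}((p-1)\s)^2 > (1 - p\s/\sqrt{2})^2$ whenever $p > \tfrac{1}{2}(1 + \sqrt{2}/\s)$, so above this midpoint the effective rate is strictly negative and the truncated Bahr--Esseen recursion closes into $\sup_t \E[|\widetilde{\MM}_{\s,\t}^{(K)}(t)|^p] < \infty$ for every fixed $K$. Standard Bramson-type barrier estimates for BBM then control the discarded contribution and show $\E[|\MM_{\s,\t}(t) - \widetilde{\MM}_{\s,\t}^{(K)}(t)|^p] \to 0$ as $K \to \infty$, uniformly in $t$, promoting the bound to $\sup_t \E[|\MM_{\s,\t}(t)|^p] < \infty$ for every $p \in (\tfrac{1}{2}(1+\sqrt{2}/\s),\, \sqrt{2}/\s)$. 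The monotonicity of $L^p$-norms on a probability space ($\|X\|_q \leq \|X\|_{p'}$ for $q \leq p'$) extends the bound to all $p \in (1, \sqrt{2}/\s)$.

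Finally, uniform $L^p$-boundedness with $p > 1$, combined with the martingale property $\E[\MM_{\s,\t}(t)] = 1$, yields uniform integrability and hence a.s.\ and $L^1$ convergence of $\MM_{\s,\t}(t)$ to a limit $\MM_{\s,\t}$ with $\E[\MM_{\s,\t}] = 1$, which is in particular non-degenerate.
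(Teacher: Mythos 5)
Your diagnosis of the obstruction is exactly right and worth stating: on $B_{1,2}$ the relation $1+\frac{\s^2-\t^2}{2}=\sqrt{2}\s$ degenerates the von Bahr--Esseen rate to $h(p)=(1-p\s/\sqrt{2})^2\ge 0$, so the standard Biggins-type contraction fails for every $p<\sqrt{2}/\s$, and any proof must use the phase cancellations that $|W_j|^p$ discards. The paper does this very differently from you: it reruns the computation \eqv(m.1)--\eqv(m.9) from Theorem~\thv(TH.B1), i.e.\ it expands $|\XX_{\b,\rho}(t)|^2$ as a double sum, integrates out the imaginary parts conditionally on the tree to produce the factors $\eee^{-\t^2(t-q)}$ (this is where the cancellation enters), and only then applies the two Jensen inequalities -- but with exponent $\sqrt 2/\g$ for some $\g>\s$ instead of the critical $\sqrt2/\s$. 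On the boundary the critical exponent $\t^2-(\s-\sqrt2)^2$ in \eqv(m.9) vanishes, and the slack $\g>\s$ turns the $q$-integral into one with strictly negative exponent, hence uniformly bounded. No truncation is needed.

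Your barrier-truncation route has a genuine gap at its load-bearing step: the claim that $\E\bigl[|\MM_{\s,\t}(t)-\widetilde{\MM}_{\s,\t}^{(K)}(t)|^p\bigr]\to 0$ as $K\to\infty$, uniformly in $t$, by ``standard Bramson-type barrier estimates.'' Those estimates control absolute values, and here the absolute-value mass of the discarded part is enormous: $\E\bigl[\sum_j |W_j(t)|\bigr]=\eee^{t\t^2/2}\to\infty$, so no triangle-inequality bound can work. Any honest bound on the remainder must again exploit phase cancellation among the descendants of the barrier-crossing particles -- which is precisely what the truncation was introduced to avoid -- and the natural spinal decomposition at the first crossing time gives an estimate of order $K\eee^{(p-1)\s K}\sup_u \E[|\MM_{\s,\t}(u)|^p]$, which both \emph{grows} in $K$ and is circular (it presupposes the $L^p$-bound being proved). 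Secondary issues: after truncation the descendant processes $\widetilde{\MM}^{(j),K}(t-s)$ carry barriers at the shifted heights $\s s+K-x_j(s)$, so they are neither i.i.d.\ nor mean one, and von Bahr--Esseen cannot be applied to $\sum_j W_j(s)\bigl(\widetilde{\MM}^{(j)}(t-s)-1\bigr)$ without reworking the centering and tracking the dependence of the $p$-th moments on the offset. The correct rate calculations you do perform (the ballot decay $\frac12((p-1)\s)^2$ beating $(1-p\s/\sqrt2)^2$ for $p>\frac12(1+\sqrt2/\s)$) are fine, but they only control the kept part; as it stands the argument does not close.
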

See Section~\ref{sec:B12} for a proof.

Note that the martingales $\MM_{\s,\t}(t)$ satisfy a recursive equation of the form
\be\Eq(smooth.1)
\mathcal{L}\left[
\MM_{\s,\t}(t+r)
\right]
=
\mathcal{L}
\Big[
\sum_{k=1}^{n(r)} a_k(r)\MM_{\s,\t}^{(k)}(t )
\Big]
,
\ee
where $\MM_{\s,\t}^{(k)}(t-r)$ are i.i.d.\ copies of $\MM_{\s,\t} (t )$ and
$a_k(r)\in \C$ are some complex weights independent from
$\MM_{\s,\t}^{(k)}(t-r), k\in\N$. If a limit $\mathcal{M}_{\s,\t}$ as $t\uparrow
\infty$ of $\MM_{\s,\t}(t+r)$ exists, then it would have  to satisfy the
equation
\be\Eq(smooth.2)
\mathcal{L}\left[
\MM_{\s,\t}
\right]
=
\mathcal{L}
\Big[
\sum_{k=1}^{n(r)} a_k(r)\MM_{\s,\t}^{(k)}
\Big]
,
\ee
where $\MM_{\s,\t}^{(k)}$ are i.i.d.\ copies of $\MM_{\s,\t}$. This type of
equation is called {\it complex smoothing transform}. A detailed study on how
solutions to such equations with complex weights look like was recently done by
Meiners and Mentemeier~\cite{MeMe16}, see also the recent paper by Kolesko and Meiners~\cite{KoleskoMeiners2016}. The case of real-valued scalar weights was treated by Alsmeier and Meiners \cite{AM13} and by Iksanov and Meiners~\cite{IM15}.

The following three results cover the strip $\vert \sigma \vert<1/\sqrt{2}$ and
basically are ``central limit theorems'' (CLTs) with random variance. 
\begin{theorem}[CLT with random variance for $\vert \sigma \vert<1/\sqrt{2}$]\TH(CLT.1)
Let $\beta=\s+i\t$ with $|\s|<1/\sqrt{2}$ and $\rho\in[-1,1]$.
For $\b \in B_1$, 
\begin{equation}\Eq(clt.1)
\wlim_{r \uparrow\infty} \wlim_{t \uparrow\infty} \mathcal{L} \left[
\frac{\MM_{\s,\t}(t+r)-\MM_{\s,\t}(r)}{ \eee^{r(1-\s^2-\t^2)}} ~\Big\vert~ \mathcal{M}_{2\s,0}
\right]
=
\mathcal{N}\left(0,C_1\MM_{2\s,0}\right)
,
\end{equation}
where  $C_1 > 0$ is some constant. 
\end{theorem}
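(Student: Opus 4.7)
The plan is to exploit the branching–Markov structure at time $r$ to decompose the increment as a sum of conditionally i.i.d.\ complex summands, and then to apply a conditional Lindeberg–Feller central limit theorem whose random limiting variance is naturally expressed in terms of the McKean martingale $\mathcal{M}_{2\sigma,0}$. Splitting the BBM at time $r$ produces
\begin{equation*}
\mathcal{M}_{\sigma,\tau}(t+r)-\mathcal{M}_{\sigma,\tau}(r)
=\sum_{k=1}^{n(r)} w_k(r)\bigl(\widetilde{\mathcal{M}}_{\sigma,\tau}^{(k)}(t)-1\bigr),
\end{equation*}
where $w_k(r):=\eee^{-r(1+(\sigma^2-\tau^2)/2)}\eee^{\sigma x_k(r)+i\tau y_k(r)}$ is $\mathcal{F}_r$-measurable (with $\mathcal{F}_r$ generated by the joint $(X,Y)$-BBM up to time $r$), and the $\widetilde{\mathcal{M}}_{\sigma,\tau}^{(k)}(t)$ are i.i.d.\ copies of $\mathcal{M}_{\sigma,\tau}(t)$, independent of $\mathcal{F}_r$. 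Since $|\sigma|<1/\sqrt{2}$ together with $\beta\in B_1$ force $\sigma^2+\tau^2<1$, the martingale $\mathcal{M}_{\sigma,\tau}(t)$ is $L^2$-bounded by \cite[Prop.~A.1]{HK15} and converges in $L^2$ to a non-degenerate limit. Passing $t\uparrow\infty$ term by term (with $n(r)$ fixed) yields the inner weak limit $S_r:=\sum_{k=1}^{n(r)} w_k(r)(\widetilde{\mathcal{M}}_{\sigma,\tau}^{(k)}-1)$.

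For the second moments of this centred conditionally i.i.d.\ sum, the identity $\sum_{k=1}^{n(r)}\eee^{2\sigma x_k(r)}=\eee^{r(1+2\sigma^2)}\mathcal{M}_{2\sigma,0}(r)$ gives
\begin{equation*}
\E\bigl[|S_r|^2\bigm|\mathcal{F}_r\bigr]
=\E|\mathcal{M}_{\sigma,\tau}-1|^2\cdot\sum_{k=1}^{n(r)}|w_k(r)|^2
=C_1\,\eee^{-r(1-\sigma^2-\tau^2)}\mathcal{M}_{2\sigma,0}(r).
\end{equation*}
Since $|2\sigma|<\sqrt{2}$, the McKean martingale $\mathcal{M}_{2\sigma,0}(r)$ is $L^2$-bounded and converges a.s.\ and in $L^2$ to $\mathcal{M}_{2\sigma,0}$. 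Dividing by the matching standard deviation of order $\eee^{-r(1-\sigma^2-\tau^2)/2}$ therefore produces a conditional variance that tends a.s.\ to $C_1\mathcal{M}_{2\sigma,0}$. A parallel computation yields the pseudo-variance $\E[S_r^2\mid\mathcal{F}_r]=C_1'\,\eee^{-r(1-\sigma^2+\tau^2)}\mathcal{M}_{2\sigma,2\tau}(r)$, which under the same normalization carries an extra factor $\eee^{-r\tau^2}$; showing that it vanishes (at least in probability) is what pins the limit down to a proper isotropic complex Gaussian rather than an anisotropic one.

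With the variance identified, the conditional Lindeberg–Feller CLT is applied given $\mathcal{F}_r$ to the $n(r)\sim\eee^r$ independent complex summands. The negligibility hypothesis reduces to
\begin{equation*}
\eee^{r(1-\sigma^2-\tau^2)/2}\max_{k\leq n(r)}|w_k(r)|\xrightarrow{r\uparrow\infty}0\quad\text{in probability},
\end{equation*}
which follows from the Bramson-type bound $\max_k x_k(r)=\sqrt{2}\,r+O(\log r)$ for BBM: the resulting exponent equals $-r(1/\sqrt{2}-|\sigma|)^2+O(\log r)$ and diverges to $-\infty$ precisely under the hypothesis $|\sigma|<1/\sqrt{2}$. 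Passing the conditioning from $\mathcal{F}_r$ to $\mathcal{M}_{2\sigma,0}$ is then routine, using the a.s.\ convergence $\mathcal{M}_{2\sigma,0}(r)\to\mathcal{M}_{2\sigma,0}$ and the fact that the limiting Gaussian depends on $\mathcal{F}_\infty$ only through this limit variable.

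The main obstacle is the control of the pseudo-variance: the auxiliary martingale $\mathcal{M}_{2\sigma,2\tau}(r)$ may sit in a phase where no $L^2$ bound is available, so one cannot directly dominate it with the exponential gain $\eee^{-r\tau^2}$. The remedy is to bootstrap from $L^p$ bounds with $p$ slightly above $1$, along the lines of the proof of Theorem~\ref{TH.B1}, to establish that this gain strictly outpaces the worst possible sub-exponential growth of $\mathcal{M}_{2\sigma,2\tau}(r)$. This is what ultimately rules out an anisotropic complex Gaussian limit and delivers the stated CLT with random variance $C_1\mathcal{M}_{2\sigma,0}$.
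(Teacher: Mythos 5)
Your argument follows the paper's proof essentially step for step: the same decomposition of $\MM_{\s,\t}(t+r)$ over the $n(r)$ subtrees at time $r$, the same use of the $L^2$-bound from \cite[Proposition~A.1]{HK15} (valid since $|\s|<1/\sqrt2$ and $\b\in B_1$ force $\s^2+\t^2<1$) to pass $t\uparrow\infty$ inside the sum, the same conditional variance computation producing $C_1\eee^{-r(1-\s^2-\t^2)}\MM_{2\s,0}(r)\to C_1\MM_{2\s,0}$, and the same verification of the Lindeberg condition via the a.s.\ bound $\max_k x_k(r)\leq\sqrt2\,r+O(\log r)$, which yields the exponent $-(1-\sqrt2\s)^2r$ exactly as in the paper. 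The one genuine point of departure is your insistence on controlling the conditional pseudo-variance $\E[S_r^2\mid\FF_r]=C_1'\,\eee^{-r(1-\s^2+\t^2)}\MM_{2\s,2\t}(r)$; the paper invokes a ``two-dimensional Lindeberg--Feller condition'' but never computes the off-diagonal part of the covariance matrix, so this step is needed to justify that the limit is the isotropic Gaussian $\NN(0,C_1\MM_{2\s,0})$ rather than an anisotropic one, and you are right to flag it. Be aware, however, that your sketched remedy is not yet conclusive: the trivial bound $|\MM_{2\s,2\t}(r)|\leq\eee^{2\t^2 r}\MM_{2\s,0}(r)$ exactly cancels the exponential gain you are counting on, and an $L^p$ bound with $p$ slightly above $1$ on $\MM_{\s,\t}$ itself does not directly control the martingale at the doubled parameter $2\b$. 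What is actually needed is a quantitative statement that $|\MM_{2\s,2\t}(r)|=\eee^{(2\t^2-\delta)r}$ for some $\delta>0$ (e.g.\ from the log-partition function asymptotics at $2\b$, or a direct moment estimate on $\XX_{2\b,\rho}(r)$), so this step requires more than the bootstrap you describe.
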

See Section~\ref{sec:b1} for a proof.

\begin{remark}
A result resembling Theorem~\ref{CLT.1} (i) was obtained by Iksanov and Kabluchko in \cite{IK15} for $\beta \in \R$.
\end{remark}

\begin{remark}
The
appearance of the random variance in Theorem~\ref{CLT.1} (and in the following ones) is in sharp contrast with the
REM~\cite{KaKli14} and generalized REM~\cite{KaKli14_G}, where CLTs with
deterministic variance hold for $\beta$ in the strip $\vert \sigma
\vert<1/\sqrt{2}$.
\end{remark}

\begin{theorem}[CLT with random variance in $B_3$]\TH(clt.B3)
For $\b \in B_3$, $\rho\in[-1,1]$ and binary branching,
\begin{equation}\Eq(clt.1.1)
\mathcal{L}
\left[
\frac{\XX_{\b,\rho}(t)}{ \eee^{t(1/2+\s^2)}} ~\Big\vert~ \MM_{2\s,0}
\right]
\underset{t \uparrow\infty}{\Longrightarrow} \mathcal{N}\left(0,C_2\MM_{2\s,0}\right)
,
\end{equation}
where $C_2 > 0$ is some constant.
 
\end{theorem}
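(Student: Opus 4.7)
The plan is to combine the branching Markov property at an intermediate time $r$ with a second-moment computation and a conditional complex central limit theorem, applied first as $t\uparrow\infty$ and then as $r\uparrow\infty$. The random variance $C_2\MM_{2\s,0}$ will arise from the $\mathcal{F}^{\T_r}$-measurable sum of squared branch-weights, which equals exactly $\MM_{2\s,0}(r)$, combined with the McKean convergence $\MM_{2\s,0}(r)\to\MM_{2\s,0}$ a.s.\ (legitimate since $2|\s|<\sqrt{2}$ throughout $B_3$).

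By the branching Markov property at time $r\leq t$,
\begin{equation*}
\wt\XX(t) := \frac{\XX_{\b,\rho}(t)}{\eee^{t(1/2+\s^2)}}
= \sum_{k=1}^{n(r)} w_k(r)\,\wt\XX^{(k)}(t-r),
\qquad w_k(r) := \eee^{\s x_k(r)+i\t y_k(r)-r(1/2+\s^2)},
\end{equation*}
where the $\wt\XX^{(k)}$ are i.i.d.\ copies of $\wt\XX(t-r)$ independent of $\mathcal{F}^{\T_r}$, and crucially $\sum_{k=1}^{n(r)}|w_k(r)|^2 = \MM_{2\s,0}(r)$. A many-to-two computation for binary BBM, splitting over the time $s$ of the most recent common ancestor (pair intensity of order $\eee^{2t-s}\,\ddd s$) and exploiting the cancellation of the $\rho$-dependent imaginary phases in $\E[\eee^{\s V_1+i\t W_1}]\,\E[\eee^{\s V_2-i\t W_2}]$, gives schematically
\begin{equation*}
\E|\XX_{\b,\rho}(t)|^2 \sim \eee^{t(1+2\s^2)} + \eee^{t(2+\s^2-\t^2)}\int_0^t \eee^{s(\s^2+\t^2-1)}\,\ddd s.
\end{equation*}
In $B_3$ the integral is dominated by $s\to t$ (because $\s^2+\t^2>1$), producing $\E|\wt\XX(t)|^2\to C_2>0$ with $C_2$ independent of $\rho$. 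An analogous computation with $y_k-y_l$ replaced by $y_k+y_l$ shows the pseudo-variance $\E\wt\XX(t)^2\to 0$, so the limit is expected to be a \emph{circular} complex Gaussian.

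Conditionally on $\mathcal{F}^{\T_r}$, $\wt\XX(t)$ is a sum of $n(r)$ independent complex random variables with $\mathcal{F}^{\T_r}$-measurable weights. A complex Lindeberg CLT applies: the conditional sum of second moments converges to $C_2\MM_{2\s,0}(r)$, the pseudo-variance sum vanishes, and the negligibility of individual summands is ensured by (a) $\max_k|w_k(r)|^2\leq \eee^{-2r(1/\sqrt{2}-\s)^2+\po(r)}$ (using $\max_k x_k(r)=\sqrt{2}\,r+\po(r)$ a.s.\ and the algebraic identity $\sqrt{2}\,\s-\tfrac{1}{2}-\s^2=-(\tfrac{1}{\sqrt{2}}-\s)^2$), which tends to $0$ since $|\s|<1/\sqrt{2}$, combined with (b) the uniform bound $\sup_t \E|\wt\XX(t)|^2<\infty$ from the previous step. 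Hence, for each fixed $r$,
\begin{equation*}
\mathcal{L}[\wt\XX(t)\mid \mathcal{F}^{\T_r}] \Longrightarrow \mathcal{N}\bigl(0,\,C_2\,\MM_{2\s,0}(r)\bigr) \qquad \text{as }t\uparrow\infty.
\end{equation*}

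Letting $r\uparrow\infty$, $\MM_{2\s,0}(r)\to\MM_{2\s,0}$ a.s.\ and in $L^1$; conditioning on $\MM_{2\s,0}$ and passing the conditional characteristic function $\E\bigl[\eee^{-(C_2/4)|u|^2\MM_{2\s,0}(r)}\bigm|\MM_{2\s,0}\bigr]$ to the limit by dominated convergence delivers $\eqv(clt.1.1)$. The main technical obstacle is the rigorous verification of the complex conditional Lindeberg condition uniformly along the double limit, together with the interchange of the $t$- and $r$-limits; alternatively one may identify any weak subsequential limit of $\wt\XX(t)$ as a solution of the complex smoothing transform $Z\laweq \sum_k w_k(r)\,Z^{(k)}$ with prescribed variance $C_2\MM_{2\s,0}$, and invoke the uniqueness result of Meiners and Mentemeier~\cite{MeMe16} (cf.\ also \cite{KoleskoMeiners2016}) to pin down the limit as $\sqrt{C_2\MM_{2\s,0}}\,G$ with $G$ an independent standard complex Gaussian, from which the stated conditional convergence follows.
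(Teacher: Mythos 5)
Your setup — decompose at time $r$ so that $\XX_{\b,\rho}(t)\eee^{-t(1/2+\s^2)}=\sum_{k\le n(r)}w_k(r)\,N^{(k)}_{\s,\t}(t-r)$ with $\sum_k|w_k(r)|^2=\MM_{2\s,0}(r)$, plus the many-to-two second-moment computation — matches the paper's starting point (Lemma~\thv(Thm.secondmoment)). The gap is the CLT step. For \emph{fixed} $r$ the conditional sum has a fixed, finite number of summands, none of which is asymptotically negligible: each has conditional second moment tending to $|w_k(r)|^2C_2>0$. Lindeberg--Feller therefore does not apply, and your intermediate claim that $\mathcal{L}[\,\cdot\mid\FF_r]$ converges to $\mathcal{N}(0,C_2\MM_{2\s,0}(r))$ as $t\uparrow\infty$ for fixed $r$ is actually false: granting the theorem, that limit is the law of $\sum_k w_k(r)\,(C_2\MM_{2\s,0}^{(k)})^{1/2}G^{(k)}$, a variance \emph{mixture} of Gaussians, which becomes Gaussian only after $r\uparrow\infty$. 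If instead you let $r$ grow with $t$ so the summands do become negligible, the Lindeberg condition needs uniform integrability of $\{|N_{\s,\t}(s)|^2\}_s$, which does not follow from $\sup_t\E|N_{\s,\t}(t)|^2<\infty$; you name this as ``the main technical obstacle'' but give no way to close it. (A minor point: the weights $w_k(r)$ are $\FF_r$-measurable, not $\FF^{\T_r}$-measurable.)

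This missing step is precisely where the paper's real work lies, and its route is different: it truncates the partition function along the whole trajectory (endpoint constraint $x_k(t)<2\s t+A\sqrt t$ and barrier $x_k(s)\le 2\s s+s^{\g}$, Lemmas~\thv(Lem.const1) and \thv(Lem.const2)), shows the truncation is negligible in probability, and then computes \emph{all} conditional moments of the truncated sum given $\FF_r$ (Lemma~\thv(Lem.const3), resting on the pairing/combinatorial estimate of Lemma~\thv(Lem.help)), obtaining $\E[|N^{c,A}_{\s,\t}(t)|^{2k}\mid\FF_r]\to k!\,(C_{2,A}\MM_{2\s,0})^k$ and vanishing unbalanced moments; the theorem then follows by the method of moments. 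Nothing in your sketch substitutes for this. Your fallback via uniqueness for the complex smoothing transform (Meiners--Mentemeier) is a genuinely different and potentially viable route — the paper only mentions that connection in passing — but as written it is a pointer, not an argument: you would still need tightness, identification of every subsequential limit as a fixed point with the prescribed random variance, and verification of the hypotheses of their classification.
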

See Section~\ref{sec:clt.B3} for a proof.

A similar result also holds on the boundary between phases $B_1$ and $B_3$, i.e., on the set
\be\Eq(bound.2)
B_{1,3} := \overline{B_1} \cap \overline{B_3} = \{\s+i\t\in\C \colon  \s^2+  \t^2=1, \vert \s\vert < 1 / \sqrt{2}\}
.
\ee

\begin{theorem}[CLT with random variance on $B_{1,3}$]\TH(CLT.2)
For $\b \in B_{1,3}$, $\rho\in [-1,1]$, and binary branching,
\begin{equation}\Eq(clt.2.1)
\mathcal{L} \left[
\frac{\XX_{\b,\rho}(t)}{ \sqrt{t} \eee^{t(1/2+\s^2)}} ~\Big\vert~ \MM_{2\s,0}
\right]
 \underset{t \uparrow \infty}{\Longrightarrow} \mathcal{N}\left(0,C_3\MM_{2\s,0}\right)
,
\end{equation}
where $C_3 > 0$ is some constant.
\end{theorem}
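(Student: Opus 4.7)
The plan is to adapt the splitting-at-time-$r$ strategy used for Theorem~\ref{clt.B3}, carefully tracking the extra $\sqrt{t}$ normalisation that arises because $\beta\in B_{1,3}$ sits exactly on the critical line $\sigma^{2}+\tau^{2}=1$. For $r\in(0,t)$, conditioning on the genealogy and positions up to time $r$ produces the branching decomposition
\[
\XX_{\beta,\rho}(t)=\sum_{j=1}^{n(r)}\chi_j(r)\,\XX^{(j)}_{\beta,\rho}(t-r),\qquad \chi_j(r):=\eee^{\s x_j(r)+i\t y_j(r)},
\]
where the $\XX^{(j)}_{\beta,\rho}(t-r)$ are i.i.d.\ copies of $\XX_{\beta,\rho}(t-r)$, independent of $\mathcal{F}^{\mathbb{T}_r}$. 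The outer plan is to let $t\uparrow\infty$ with $r$ fixed, then send $r\uparrow\infty$.

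A direct Gaussian computation, combined with the pair-overlap density $\eee^{2s-u}\ddd u$ of binary BBM, gives $\E[\XX_{\beta,\rho}(s)]=\eee^{s(1/2+\sigma^{2})+is\sigma\tau\rho}$, $\E[|\XX_{\beta,\rho}(s)|^{2}]\sim C\, s\,\eee^{s(1+2\sigma^{2})}$, and $\E[\XX_{\beta,\rho}(s)^{2}]=O(\eee^{s(1+2\sigma^{2})})$. The linear-in-$s$ factor in $\E[|\XX|^{2}]$, which appears because the overlap integrand $\eee^{u(\sigma^{2}+\tau^{2}-1)}$ is identically $1$ precisely on $B_{1,3}$, is exactly what dictates the $\sqrt{t}$ normalisation; meanwhile the smallness of the ``pseudo-variance'' $\E[\XX^{2}]$ relative to $\E[|\XX|^{2}]$ ensures that the limiting complex Gaussian is circularly symmetric. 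The centering contribution $\E[\XX_{\beta,\rho}(t-r)]\sum_j\chi_j(r)/(\sqrt{t}\,\eee^{t(1/2+\sigma^{2})})$ is bounded by $|\XX_{\beta,\rho}(r)|/(\sqrt{t}\,\eee^{r(1/2+\sigma^{2})})=O(1/\sqrt{t})$ for fixed $r$, so it is harmless.

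Introducing
\[
\widetilde W^{(j)}_{t-r}:=\frac{\XX^{(j)}_{\beta,\rho}(t-r)-\E[\XX_{\beta,\rho}(t-r)]}{\sqrt{t-r}\,\eee^{(t-r)(1/2+\sigma^{2})}},
\]
the centered fluctuation reads
\[
\sqrt{\tfrac{t-r}{t}}\sum_{j=1}^{n(r)}\frac{\chi_j(r)}{\eee^{r(1/2+\sigma^{2})}}\,\widetilde W^{(j)}_{t-r},
\]
and, conditionally on $\mathcal{F}^{\mathbb{T}_r}$, the $\widetilde W^{(j)}_{t-r}$ are i.i.d.\ centered with $\E[|\widetilde W^{(j)}_{t-r}|^{2}]\to 2C_{3}>0$ and $\E[(\widetilde W^{(j)}_{t-r})^{2}]\to 0$. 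The conditional second moment of the sum therefore equals $\frac{t-r}{t}\,\E[|\widetilde W^{(j)}_{t-r}|^{2}]\,\MM_{2\sigma,0}(r)\to 2C_{3}\,\MM_{2\sigma,0}(r)$. A conditional Lindeberg--Feller CLT for triangular arrays, applied pathwise given $\mathcal{F}^{\mathbb{T}_r}$, then yields weak convergence to $\mathcal{N}(0,C_{3}\MM_{2\sigma,0}(r))$ for each fixed $r$. Since $|2\sigma|<\sqrt{2}$, the McKean-type martingale $\MM_{2\sigma,0}(r)$ converges a.s.\ and in $L^{1}$ to $\MM_{2\sigma,0}$, and a routine interchange of the $t$- and $r$-limits concludes.

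The main technical obstacle is verifying the Lindeberg condition: because $\XX_{\beta,\rho}(t-r)$ is heavy-tailed and sits at a critical second moment, no uniform $(2+\delta)$-moment on $\widetilde W^{(j)}_{t-r}$ is available. The remedy is to truncate $\widetilde W^{(j)}_{t-r}$ at a level growing to infinity slower than $\sqrt{t-r}\,\eee^{(t-r)(1/2+\sigma^{2})}$, handling the truncated part through its bounded $L^{2}$-norm and the discarded tail via Markov's inequality together with the a.s.\ finiteness of $\MM_{2\sigma,0}(r)$. This is essentially the same truncation used in the proof of Theorem~\ref{clt.B3}; only the $\sqrt{t}$-bookkeeping and the different $u$-integral arising on the critical line $\sigma^{2}+\tau^{2}=1$ require adaptation.
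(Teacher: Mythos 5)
Your second-moment computations, the identification of the extra $\sqrt{t}$ coming from the overlap integral $\int_0^t \eee^{q(1-\s^2-\t^2)}\,\ddd q = t$ on the critical line $\s^2+\t^2=1$, and the observation that the pseudo-variance $\E[\XX_{\b,\rho}(t)^2]$ is of lower order (forcing circular symmetry of the limit) are all correct and consistent with Lemma~\thv(Thm.secondmoment)(ii). The gap is in the step that is supposed to produce Gaussianity. You decompose at a \emph{fixed} time $r$, send $t\uparrow\infty$ first, and invoke a conditional Lindeberg--Feller CLT given $\FF^{\mathbb{T}_r}$. But for fixed $r$ the row of your triangular array has exactly $n(r)$ terms, a finite number that does not grow with $t$, and each term $c_j(r)\widetilde W^{(j)}_{t-r}$ carries a non-vanishing fraction $|c_j(r)|^2/\sum_i |c_i(r)|^2$ of the conditional variance. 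The Lindeberg condition therefore fails identically and no CLT can follow: the $t\uparrow\infty$ limit of your sum, if it exists in law, is $\sum_j c_j(r)G_j$, where $G_j$ is the distributional limit of $\widetilde W^{(j)}_{t-r}$ --- which is precisely the unknown law the theorem is about, so the argument is circular. This is the essential difference from Phase $B_1$ with $|\s|<1/\sqrt{2}$ (Theorem~\thv(CLT.1)), where the inner objects $\MM^{(k)}_{\s,\t}(t)$ converge a.s.\ and in $L^2$ as $t\uparrow\infty$ and Gaussianity is created only afterwards by letting $r\uparrow\infty$, so that $n(r)\uparrow\infty$. On $B_{1,3}$, where $\s^2+\t^2=1$, the inner objects do not converge a.s., and the Gaussianity must be extracted from the $t\uparrow\infty$ limit itself.

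The paper does this by the method of moments: after truncating with the barrier $x_k(t)<2\s t+A\sqrt{t}$ and the path constraint $x_k(s)\le 2\s s+s^{\g}$ (Lemmas~\thv(Lem.const1) and \thv(Lem.const2), which are needed because without truncation the higher moments are not controlled), it shows that the conditional $2k$-th absolute moments of $\hat N^{c,A}_{\s,\t}(t)$ converge to $k!\,(C_{3}\MM_{2\s,0})^k$ while the mixed moments vanish (Lemma~\thv(Lem.const3')); the Gaussian combinatorics comes from a matching argument on the $2k$ particle labels in which the dominant configurations are $k$ pairs that branched before time $r$. If you wish to retain a Lindeberg-type argument you would need to let the splitting time $r=r(t)$ tend to infinity with $t$, establish the variance asymptotics uniformly in that regime, and still identify the limit law of the individual summands --- at which point you are essentially redoing the moment computation. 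As written, the proposal does not prove the theorem.
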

See Section~\ref{sec:B13} for a proof.

Recall that the behaviour of the partition function at $\beta = \sqrt{2}$ is determined by the martingale $\MM_{1,0}(t)$, which is related to another martingale -- the so-called \textit{derivative martingale} $\mathcal{Z}(t)$:
\be \Eq(real.2)
\mathcal{Z}(t):=\sum_{i=1}^{n(t)}(\sqrt{2}t-x_k(t))\eee^{-\sqrt{2}(\sqrt{2} t-x_k(t))}.
\ee
Lalley and Sellke proved in \cite{LS} that $\mathcal{Z}(t)$ converges a.s.\ as $t \to \infty $ to a non-trivial limit $\mathcal{Z}$ which is a positive and a.s.\ finite random variable.

At the boundary,
\be\Eq(bound.3)
B_{2,3} := \overline{B_2} \cap  \overline{B_3} = \Big\{\s+i\t\in\C \colon   \vert \s\vert =\frac{1}{\sqrt{2}}, \vert\t\vert \geq \frac{1}{\sqrt{2}}\Big\},
\ee
including the \textit{triple point} 
\begin{align}
\label{}
\beta_{1,2,3} := \overline{B_1} \cap  \overline{B_2} \cap \overline{B_3} = (1+i)/\sqrt{2}
,
\end{align}
after appropriate rescaling, we have the following CLT with random variance.

\begin{theorem}[CLT with random variance for $|\s|=1/\sqrt{2}$]\TH(CLT.B)
Let $\beta = \s+i\t$ with $|\s| = 1/\sqrt{2}$ and $\rho \in [-1,1]$ and assume binary branching. Then:
\begin{itemize}  \item[(i)] For $\t > 1/\sqrt{2}$, 
\begin{equation}\Eq(clt.b.1)
\wlim_{r \uparrow\infty} \wlim_{t \uparrow\infty} \mathcal{L}\left[ r^{1/4} \cdot \frac{\XX_{\b,\rho}(t+r)}{ \eee^{(t+r)(1/2+\s^2)}} ~\Big\vert~ \mathcal{F}_r\right] 
= \mathcal{N}\left(0,C_2 \sqrt{\frac{2}{\pi}}\mathcal{Z}\right)
.
\end{equation}

\item[(ii)] For $\t = 1/\sqrt{2}$,
\begin{equation}\Eq(clt.b.2)
\wlim_{r \uparrow\infty} \wlim_{t \uparrow\infty} \mathcal{L} \left[  \frac{r^{1/4}}{\sqrt{t}}\cdot \frac{\XX_{\b,\rho}(t+r)}{ \eee^{(t+r)(1/2+\s^2)}} ~\Big\vert~ \mathcal{F}_r\right] = \mathcal{N}\left(0,C_3\sqrt{\frac{2}{\pi}}\mathcal{Z}\right)
.
\end{equation}

\end{itemize}
\end{theorem}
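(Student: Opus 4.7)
The plan is to exploit the Markovian branching structure at the intermediate time $r$ so as to decompose the iterated $\wlim_r\wlim_t$ into (i) an inner $L^2$-moment computation along the subtrees and (ii) an outer CLT harvesting the critical asymptotics of the additive McKean martingale at $2\s=\sqrt{2}$. The starting point is the branching decomposition
\be
\XX_{\b,\r}(t+r) = \sum_{k=1}^{n(r)} \eee^{\s x_k(r)+i\t y_k(r)}\,\XX^{(k)}_{\b,\r}(t),\Eq(planBd)
\ee
where, conditionally on $\FF_r$, the $\XX^{(k)}_{\b,\r}(t)$ are i.i.d.\ copies of $\XX_{\b,\r}(t)$ independent of $\FF_r$. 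Fixing $r$ and sending $t\uparrow\infty$ first freezes the weights and identifies the inner limit law; then $r\uparrow\infty$ is extracted via the well-known identity $\sqrt{r}\,\MM_{\sqrt{2},0}(r) \to \sqrt{2/\pi}\,\mathcal{Z}$ (Aïdékon--Shi), which links the critical additive martingale at $2\s=\sqrt{2}$ to the Lalley--Sellke derivative martingale \eqv(real.2).

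For the inner limit I would use the many-to-two lemma on the GW tree and the Gaussian identity
\be
\E\bigl[\eee^{\s(x_k+x_l)+i\t(y_k-y_l)}\bigr] = \eee^{t(\s^2-\t^2)+s(\s^2+\t^2)},
\ee
with $s$ the time of the most recent common ancestor (the $\rho$-dependent cross-covariances $\cov(x_k,y_l)$ cancel in the combination $x_k+x_l$ vs $y_k-y_l$). This yields $\E|\XX_{\b,\r}(t)|^2/\eee^{t(1+2\s^2)} \to C_2$ in case (i) ($\s^2+\t^2>1$) and $\sim C_3\,t$ in case (ii) ($\s^2+\t^2=1$), where the off-diagonal integral $\int_0^t \eee^{s(\s^2+\t^2-1)}\,ds$ saturates at $t$. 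Since $|\E\XX|^2/\eee^{2t(1/2+\s^2)} = \eee^{-t(\s^2+\t^2)} \to 0$, cross terms between distinct subtrees at time $r$ are asymptotically negligible, so the conditional (complex) second moment of $r^{1/4}\XX_{\b,\r}(t+r)/\eee^{(t+r)(1/2+\s^2)}$ (resp.\ with the additional $1/\sqrt{t}$ factor in case (ii)) converges as $t\uparrow\infty$ to
\be
r^{1/2}\,C_\cdot\,\sum_{k\le n(r)} \eee^{2\s x_k(r)}/\eee^{2r(1/2+\s^2)} = r^{1/2}\,C_\cdot\,\MM_{\sqrt{2},0}(r).
\ee

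For the Gaussian shape and the outer limit I would apply a conditional complex Lindeberg--Feller CLT to the weighted sum on the right of \eqv(planBd); the Lindeberg condition reduces to showing $\max_{k\le n(r)} \eee^{2\s x_k(r)}/\sum_{k\le n(r)}\eee^{2\s x_k(r)} \to 0$ in probability as $r\uparrow\infty$, a standard extremal-BBM estimate on the critical scale. Combined with $\sqrt{r}\,\MM_{\sqrt{2},0}(r) \to \sqrt{2/\pi}\,\mathcal{Z}$, this yields the claimed limit $\NN\bigl(0,C_\cdot\sqrt{2/\pi}\,\mathcal{Z}\bigr)$.

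The hardest part will be the interleaving of the two limits. After $t\uparrow\infty$, the partition function $\XX_{\b,\r}(t)/\eee^{t(1/2+\s^2)}$ does not converge in $L^2$ on the boundary $B_{2,3}$ (though it is tight with bounded second moment and vanishing mean), so the conditional CLT must be built from the moment identification above together with the complex smoothing-transform fixed-point structure \eqv(smooth.1)--\eqv(smooth.2), rather than from $L^2$-convergence of the individual factors. In case (ii) at the triple point there is the additional subtlety of simultaneous criticality $\s^2+\t^2=1$ and $2\s^2=1$: the $\sqrt{t}$-correction (from the saturating off-diagonal integral) and the $r^{1/4}$-correction (from $\sqrt{r}\,\MM_{\sqrt{2},0}(r)\to\sqrt{2/\pi}\,\mathcal{Z}$) must be tracked consistently throughout the iterated limit.
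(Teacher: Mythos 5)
Your second-moment analysis is sound and matches the paper's Lemma \thv(Thm.secondmoment): the cross-covariance cancellation in $\s(x_k+x_l)+i\t(y_k-y_l)$, the dichotomy $C_2$ versus $C_3 t$ according to whether $\s^2+\t^2>1$ or $=1$, the negligibility of cross terms between distinct time-$r$ subtrees, and the identification $r^{1/2}\MM_{\sqrt 2,0}(r)=\M^{\mathrm{SH}}_{1,0}(r)\to\sqrt{2/\pi}\,\mathcal Z$ are all exactly the ingredients the paper uses (Lemma \thv(TH.real) for the last point). The genuine gap is in how you propose to get the Gaussian shape. A conditional Lindeberg--Feller CLT applied to the decomposition \eqv(planBd) cannot work with the prescribed order of limits: for fixed $r$ the sum has a \emph{fixed finite} number $n(r)$ of terms, so the inner $t\uparrow\infty$ limit of the conditional law is not produced by an accumulation of many small independent contributions at time $r$; and by the time you would let $r\uparrow\infty$, the individual factors $\XX^{(k)}_{\b,\r}(t)/\eee^{t(1/2+\s^2)}$ have no $t$-limit as random variables on $B_{2,3}$ (as you yourself note), so there is no limiting triangular array to which Lindeberg--Feller could be applied. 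This is precisely why the paper's Phase-$B_1$ argument (Theorem \thv(CLT.1)), where the factors do converge in $L^2$, does not transfer here.

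What the paper does instead, and what your proposal is missing, is the method of moments for the \emph{truncated} partition function: Lemmas \thv(Lem.const1) and \thv(Lem.const2) justify restricting to particles with $x_k(t)<2\s t+A\sqrt t$ staying below $2\s s+s^\gamma$, and Lemma \thv(Lem.const3) (adapted as Lemma \thv(Lem.const3.3)) computes \emph{all} conditional moments $\E[|N^{c,A}_{\s,\t}(t)|^{2k}\mid\FF_r]$ via a pairing of the $2k$ particle labels and the asymptotic comparison Lemma \thv(Lem.help), showing that the dominant configurations are those in which the $k$ pairs branch before time $r$; this produces the Wick factor $k!\,(C_{2,A}\M^{\mathrm{SH}}_{1,0})^k$ combinatorially and the vanishing of the unbalanced moments $\E[N^k\overline N^{k'}]$, which together characterize the complex Gaussian. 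Your fallback --- ``the moment identification above together with the smoothing-transform fixed-point structure'' --- does not substitute for this: you have identified only the second moment, and the fixed-point equation \eqv(smooth.2) does not by itself force Gaussianity of the limit. Without the truncation the higher moments are not even controlled (they are dominated by atypically high particles), so the entire higher-moment machinery, absent from your proposal, is the core of the proof rather than a technical afterthought. The same remark applies to case (ii) at $\t=1/\sqrt 2$, where the $\sqrt t$-normalized moments of Lemma \thv(Lem.const3') must be combined with the $\M^{\mathrm{SH}}_{1,0}$ replacement.
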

See Section~\ref{sec:B_2-B_3-boundary} for a proof.

\subsection{Organization of the rest of the paper.} The remainder of the paper
is organized as follows. In Section~\ref{sec:b1}, we prove Theorems~\thv(TH.B1)
and \thv(CLT.1) concerning the behaviour of the partition function in Phase
$B_1$. In Section~\ref{sec:b3}, we treat Phase $B_3$. We start with a second
moment computation which is then in the next subsection generalised to a
constrained higher moment computation. Finally, in Section~\ref{sec:clt.B3}, we
prove Theorem~\thv(clt.B3). The boundaries $B_{1,3}, B_{2,3}$ (Theorems~\ref{CLT.2} and
\ref{CLT.B}) are proved in Section~\ref{sec:boundaries}.
Section~\ref{sec:phase-diagram} contains the proof of
Theorem~\ref{Cor:phase-diagram}.

\section{Proof of results for phase $B_1$} 
\label{sec:b1}

We start by proving the martingale convergence of $\mathcal{M}_{\s,\t}(t)$.

\begin{proof}[Proof of Theorem \thv(TH.B1)]
One readily checks that $\mathcal{M}_{\s,\t}(t)$ is a martingale with expectation $1$. Next, we compute the $\frac{\sqrt{2}}{\s}$-moment of $\mathcal{M}_\b(t)$. To do this, first consider
\be\Eq(m.1)
\E\Big[\Big\vert \sum_{k=1}^{n(t)} \eee^{\s x_k(t)+i\t y_k(t)} \Big\vert^{ \frac{\sqrt{2}}{\s}}\Big]
=
\E\Big[\Big\vert \sum_{k=1}^{n(t)} \eee^{(\s+i\rho \t ) x_k(t)+i\sqrt{1-\rho^2}\t z_k(t)} \Big\vert^{ \frac{\sqrt{2}}{\s}}\Big],
\ee
where we used  Representation \eqv(cor.1). The right-hand side of \eqv(m.1) is equal to
\begin{multline}
\Eq(m.2)
\E\Big[\Big(\Big\vert \sum_{k=1}^{n(t)} \eee^{(\s+i\rho \t ) x_k(t)+i\sqrt{1-\rho^2}\t z_k(t)} \Big\vert^2\Big)^{ \frac{1}{\sqrt{2}\s}}\Big]
\\
= 
\E\Big[\Big(  \sum_{k,j=1}^{n(t)} \eee^{ \s   (x_k(t)+x_j(t))+ i\rho \t(x_k(t)-x_j(t))+i\sqrt{1-\rho^2}\t( z_k(t)-z_j(t))} \Big)^{ \frac{1}{\sqrt{2}\s}}\Big].
\end{multline}
By Jensen's inequality for the conditional expectations, and because $1/\sqrt{2}\s <1$, for $\s>1/\sqrt{2} $, \eqv(m.2) is bounded from above by 
\begin{multline}
\Eq(m.3)
\E\Big[\Big(  \sum_{k,j=1}^{n(t)} \eee^{ \s   (x_k(t)+x_j(t))+ i\rho \t(x_k(t)-x_j(t))}\E \Big[ \eee^{\sqrt{1-\rho^2}\t( z_k(t)-z_j(t))}\Big] \Big)^{ \frac{1}{\sqrt{2}\s}}\Big]
\\
= \E\Big[\Big(\eee^{-  (1-\rho^2)\t(t-q_{k,j}) }  \sum_{k,j=1}^{n(t)} \eee^{\s   (x_k(t)+x_j(t))+ i\rho \t(x_k(t)-x_j(t))}  \Big)^{ \frac{1}{\sqrt{2}\s}}\Big]
.
\end{multline}
We set
\be\Eq(m.4)
q_{k,j} := d(x_k(t),x_j(t)).
\ee
By the branching property,
\be\Eq(m.3.1)
x_k(t)-x_j(t) \stackrel{D}{=} x_{k'}^{(1)}(t-q_{k,j})-x_{j'}^{(2)}(t-q_{k,j}),
\ee
 where $k'$ and $j'$ are two BBM particles at time $t-q_{k,j}$ from two independent copies $X^{(1)}(\cdot)$ and $X^{(2)}(\cdot)$ of a BBM and let $n^{(1)}(s)$ and  $n^{(2)}(s)$ denote the number of particles of $X^{(1)}$, resp.\ $X^{(2)}$, at time $s$. Using  \eqv(m.3.1), we rewrite \eqv(m.3) as  
\be\Eq(m.5) 
\E\Big[\Big(\eee^{-  (1-\rho^2)\t^2(t-q_{k,j}) }  \sum_{k=1}^{n(q_{k,l})} \eee^{2 \s   x_k(q_{k,l})} \sum_{\substack{k'\leq n^{(1)}(t-q_{k,j}),\\j'\leq n^{(2)}(t-q_{k,j})}} \eee^{i\rho \t\left(x_{k'}^{(1)}(t-q_{k,j})-x_{j'}^{(2)}(t-q_{k,j})\right)} 
\Big)^{\frac{1}{\sqrt{2}\s}}\Big]
.
\ee
Using again Jensen's inequality for the conditional expectation ($\sum (\ldots)^{\sqrt{2}\s}\leq (\sum (\ldots))^{\sqrt{2}\s} $ since $\sqrt{2}\s >1$), we bound \eqv(m.5) from above by
 \be\Eq(m.6) 
 \E\Big[\eee^{-  \frac{(1-\rho^2)\t^2(t-q_{k,j})}{\sqrt{2}\s} }  \sum_{k=1}^{n(q_{k,l})} \eee^{\sqrt{2}  x_k(q_{k,l})} \Big(\sum_{\substack{k'\leq n^{(1)}(t-q_{k,j}),\\j'\leq n^{(2)}(t-q_{k,j})}} \eee^{(\s+i\rho \t)x_{k'}^{(1)}(t-q_{k,j})+(\s-i\rho\t) x_{j'}^{(2)}(t-q_{k,j})}  \Big)^{ \frac{1}{\sqrt{2}\s}}\Big]
.
 \ee
 Next, we bound \eqv(m.6) from above, using Jensen's inequality for the conditional expectation ($\frac{1}{\sqrt{2}\s}<1$), by
\begin{multline}
\Eq(m.6.1) 
\E\Big[ \eee^{-\frac{(1-\rho^2)\t^2(t-q_{k,j})}{\sqrt{2}\s}}  \sum_{k=1}^{n(q_{k,l})} \eee^{\sqrt{2}  x_k(q_{k,l})} 
\\
\times \Big(\E\Big[\sum_{\substack{k'\leq n^{(1)}(t-q_{k,j}),\\j'\leq n^{(2)}(t-q_{k,j})}} \eee^{(\s+i\rho \t)x_{k'}^{(1)}(t-q_{k,j})+(\s-i\rho \t)x_{j'}^{(2)}(t-q_{k,j})} ~\Big\vert~ \mathcal{F}_{q_{k,l}} \Big]\Big)^{ \frac{1}{\sqrt{2}\s}}\Big]
  ,
\end{multline}
  where $\FF_{q_{k,l}}$ is the $\s$-algebra generated by the BBM $X$ up to time $q_{k,l}$, in particular we condition on $q_{k,l}$.
  Calculating the inner expectations in \eqv(m.6.1), gives
  \be\Eq(m.7)
  \begin{aligned}
  &\E\Big[\sum_{k'\leq n^{(1)}(t-q_{k,j}),j'\leq n^{(2)}(t-q_{k,j})} \eee^{(\s+i\rho \t)x_{k'}^{(1)}(t-q_{k,j})+(\s-i\rho \t)x_{j'}^{(2)}(t-q_{k,j})} ~\Big\vert~ \FF_{q_{k,l}} \Big]\\
  &=K\eee^{2(t-q_{k,j})} \int_{-\infty}^\infty \int_{-\infty}^\infty \dd y ~ \dd y' ~ \eee^{(\s+i\t)y+(\s-i\t)y'} \eee^{-\frac{y^2+y'^2}{2(t-q_{k,j})}}\frac{1}{2\pi (t-q_{k,j})} \\
  &=K \eee^{(\s^2-\rho^2\t^2)(t-q_{k,j})+2(t-q_{k,j})}
  \end{aligned}
  \ee
  by completing the square.
  Hence, \eqv(m.6.1) is equal to
  \begin{align}
  \Eq(m.8) 
     K \E\Big[ \eee^{\frac{(\s^2- \t^2+2)(t-q_{k,j})}{\sqrt{2}\s} }  \sum_{k=1}^{n(q_{k,l})} \eee^{\sqrt{2}  x_k(q_{k,l})}\Big]
      &=
      K\int_0^t \dd q ~ \eee^{q}\eee^{\frac{(\s^2- \t^2+2)(t-q )}{\sqrt{2}\s} }  \int_{-\infty}^{\infty} \dd x ~ \eee^{\sqrt{2}x-\frac{x^2}{2q}}\frac{1}{\sqrt{2\pi q}}\nonumber\\
      &= K\int_0^t  \dd q ~ \eee^{\frac{(\s^2- \t^2+2)(t-q )}{\sqrt{2}\s} } \eee^{2q},
  \end{align}
  by computing the Gaussian integral. Using \eqv(m.8) and noticing that the normalization factor in \eqv(mart.1) is equal to $\eee^{\frac{-2t-(\s^2-\t^2)t}{\sqrt{2}\s}}$, we bound the $\frac{\sqrt 2}{\s}$-moment of $\mathcal{M}_{\s,\t}(t)$ by
\begin{align}
\Eq(m.9) 
K\int_0^t  \dd q ~\eee^{\frac{(\s^2- \t^2+2)(t-q )-2t-(\s^2-\t^2)t}{\sqrt{2}\s} } \eee^{2q}
=K \int_0^t \dd q ~ \eee^{\frac{(\t^2-(\s-\sqrt{2})^2)q}{\sqrt{2}\s}} 
.
\end{align}
For $|\t|+|\s|<\sqrt{2}$, the right-hand side\ of \eqv(m.9) is uniformly bounded
by some constant $C$. Since $\MM_{\s,\t}(t)$ is bounded in $L^p$ for some $p>1$,
the a.s.\ limit exists and the convergence also holds in $L^1$. Moreover, 
$\E[\MM_{\s,\t}(t)]=1$ and hence the limit is non-degenerate.
\end{proof}

Next, we turn to proving the central limit theorem when $\s<1/\sqrt{2}$.

\begin{proof}[Proof of Theorem~\thv(CLT.1)] 
  We start with the proof of \eqv(clt.1).
Let
\begin{equation}\Eq(clt.3)
a_k(r) := \eee^{-r\left(1+\frac{\s^2}{2}-\t^2\right)}e^{\s x_k(r)+i\t y_k(r)}.
\end{equation}
Then, we can rewrite $\MM_{\s,\t}(t)$ as
\begin{equation}\Eq(clt.4)
\MM_{\s,\t}(t+r)=\sum_{k=1}^{n(r)} a_k(r)\MM_{\s,\t}^{(k)}(t ),
\end{equation}
where $\MM_{\s,\t}^{(k)}(t)$ are i.i.d.\ copies of $\MM_{\s,\t} (t )$. Hence,
conditional on $\mathcal{F}_r$, $\MM_{\s,\t}(t)$ can be written as a sum of
independent random variables. To prove a CLT, we want to use the two-dimensional
Lindeberg-Feller condition (conditional on $\mathcal{F}_r$). First, we take the
limit $t\uparrow \infty$. For $\s<1/\sqrt{2}$ and $\beta\in B_1$, we have $\s^2+\t^2<1$. Then, by
\cite[Proposition~A.1]{HK15}, $\MM_{\s,\t}^{(k)}(t)$ is $L^2$-bounded and
\begin{equation}\Eq(clt.7)
\lim_{t\uparrow \infty}\E\left[\left \vert \MM_{\s,\t}^{(k)}(t) \right\vert^2  \right]=C_1.
\end{equation}
Hence, the a.s.\  limit $ \MM_{\s,\t}$ exists in $L^2$ and as $t\uparrow \infty$ the right-hand side of \eqv(clt.4) converges a.s.\ to
\begin{equation}\Eq(clt.4')
\MM_{\s,\t}=\sum_{k=1}^{n(r)} a_k(r)\MM_{\s,\t}^{(k)},
\end{equation}
where $\MM_{\s,\t}^{(k)}$ are i.i.d.\ copies of $\MM_{\s,\t}$. To compute the variance of \eqv(clt.4'), consider
\begin{equation}\Eq(clt.5)
\sum_{k=1}^{n(r)} \E\left[\left \vert a_k(r)\MM_{\s,\t}^{(k)} \right\vert^2 ~\Big\vert~ \mathcal{F}_r\right].
\end{equation}
\eqv(clt.5) is equal to
\begin{equation}\Eq(clt.6)
\sum_{k=1}^{n(r)} \vert a_k(r) \vert^2 \E\left[\left \vert \MM_{\s,\t}^{(k)} \right\vert^2  \right]=C_1\sum_{k=1}^{n(r)} \vert a_k(r) \vert^2,
\end{equation}
by \eqv(clt.7). Now,
\begin{equation}\Eq(clt.8)
C_1\sum_{k=1}^{n(r)} \vert a_k(r) \vert^2=C_1\sum_{k=1}^{n(r)} \eee^{2\s x_k(r)-2r\left(1+\frac{\s^2}{2}-\frac{\t^2}{2}\right)}= C_1 \MM_{2\s,0}(r) \eee^{-r\left(1-(\s^2+\t^2)\right)}
.
\end{equation}
\eqv(clt.8) together with  the extra rescaling in \eqv(clt.1),
\begin{equation}\Eq(clt.9)
C_1 \eee^{(1-\s^2-\t^2)r}\sum_{k=1}^{n(r)} \vert a_k(r) \vert^2= C_1 \MM_{2\s,0}(r),
\end{equation}
which converges a.s. as $r\uparrow \infty$ to $C_1 \MM_{2\s,0}$

It remains to check the Lindeberg-Feller condition. We set
 
\begin{equation} \Eq(def.b)
b_k(r) := a_k(r)\eee^{-(1-\s^2-\t^2)r}.
\end{equation}
 Let $\e >0$ and consider
\begin{align}\Eq(clt.1.21)
\frac{1}{ C_1 \MM_{2\s,0}(r)}\sum_{i=1}^{n(r)} \E\Big[ & \left\vert b_k(r)\left(\MM_{\s,\t}^{(k)}-1\right) \right\vert^2
\nonumber
\\
& \times \1\{\vert b_k(r)\left(\MM_{\s,\t}^{(k)}-1\right)\vert > \e\sqrt{C_1\MM_{2\s,0}(r)}\} ~\Big\vert~ \mathcal{F}_r\Big]
.
\end{align}
We rewrite \eqv(clt.1.21) as
\begin{multline}\Eq(clt.1.24)
\frac{1}{ C_1 \MM_{2\s,0}(r)}\sum_{i=1}^{n(r)} b_k(r) \bar b_{k}(r)\E\Big[\left\vert \left(\MM_{\s,\t}^{(k)}-1\right) \right\vert^2
\\
\times \1\{\vert \left(\MM_{\s,\t}^{(k)}-1\right)\vert^2 > \e^2 \vert b_k(r)\vert^2 C_1 \MM_{2\s,0}(r)\} \Bigm| \mathcal{F}_r\Big]
.
\end{multline}
We consider for a fixed $k$
\begin{equation}\Eq(clt.1.25)
\E\left[\left\vert\left( M_{\s,\t}^{(k)}-1\right)\right\vert^2\1\{\vert \left(M_{\s,\t}^{(k)}-1\right)\vert^2>\e^2 \vert b_k(r)\vert^{-2} C^2 M_{2\s,0}(r)\} \Bigm\vert \mathcal{F}_r\right]
.
\end{equation}
Using again that by \cite[Proposition~A.1]{HK15}
\begin{equation}\Eq(clt.1.22)
 \E\left[ \left\vert \left(\MM_{\s,\t}^{(k)}-1\right)\right\vert^2\right] = C_1<\infty
,
\end{equation}
we have that \eqv(clt.1.25) converges to zero as $r \uparrow \infty$ if
\begin{equation}\Eq(clt.1.26)
\vert b_k(r)\vert^{-2} C_1 \MM_{2\s,0}(r) \underset{r\uparrow \infty}{\longrightarrow} \infty
.
\end{equation}
Observe that $\MM_{2\s,0}(r)$ is a $L^2$-bounded
martingale with mean one, if $\s<1/\sqrt{2}$. Hence, it converges a.s.\ and in
$L^1$. Consider
\begin{equation}\Eq(clt.1.27)
\vert b_k(r)\vert^{-2}=\eee^{-2\s x_k(r)+2\left(\frac{1}{2}+\s^2\right)r},
\end{equation}
since $x_k(r)<\sqrt{2}r$ a.s.\ (by Lalley-Selke argument in \cite{LS}). On this event, we have
\begin{equation}\Eq(clt.1.28)
\vert b_k(r)\vert^{-2}\geq \eee^{(-2\sqrt 2 \s +1+2\s^2)r}=\eee^{(1-\sqrt{2}\s)^2r},
\end{equation}
which converges to infinity as $r\uparrow \infty$. Hence,  \eqv(clt.1.26) holds a.s.

 \end{proof}

\section{Proof of CLT for phase $B_3$}
\label{sec:b3}

In this section, we deal with phase $B_3$ and prove Theorem~\ref{clt.B3}.

\subsection{Second moment computations} 
 We start by controlling the second moment of $N_{\s,\t}(t)$ defined in
\eqv(clt.2.1)  in phase $B_3$ and its appropriately scaled version 
\begin{align}
\hat N_{\s,\t}(t):=t^{-1/2}N_{\s,\t}(t)
\end{align}
on the boundary $B_{1,3}$.
\begin{lemma}\TH(Thm.secondmoment)
It holds:
\begin{itemize}

\item[(i)] For $\b\in B_3$ or $\b \in B_{2,3}\setminus \{(1+i)/\sqrt{2}\}$ and any $\rho\in [-1,1]$, 
\begin{equation}\Eq(mom.1)
\lim_{t\uparrow \infty}\E\left[\vert N_{\s,\t}(t)\vert^2\right]=C_2 < \infty,
\end{equation}
for some positive constant $0<C_2<\infty$\footnote{$C_2$ depends on $\s$ and $\t$ but not on $\rho$. We do not make this dependence explicit in our notation.} .

\item[(ii)] For $\b\in B_{1,3}$ or $\b=\frac{1}{\sqrt{2}}(1+i)$ and any $\rho\in [-1,1]$,
\begin{equation}\Eq(mom.1.1)
\lim_{t\uparrow \infty}\E\left[\vert \hat N_{\s,\t}(t)\vert^2\right]=C_3 < \infty,
\end{equation}
for some positive constant $0<C_3<\infty$.
\end{itemize}
\end{lemma}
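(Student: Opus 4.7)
I would prove the lemma by a direct second–moment computation: expand $|\XX_{\b,\rho}(t)|^2$ as a double sum, use the decomposition $Y=\rho X+\sqrt{1-\rho^2}Z$ from \eqv(cor.1) to integrate out the $Z$–component, and apply the classical many–to–two formula for BBM to reduce matters to a one–dimensional elementary integral. The asymptotics of that integral as $t\uparrow\infty$ are decided entirely by the sign of $\s^2+\t^2-1$, and this is precisely what distinguishes case (i) from case (ii).

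\textbf{Setting up the moment.} Using \eqv(cor.1), I would write
\[
\E\bigl[|N_{\s,\t}(t)|^2\bigr]
=\eee^{-(1+2\s^2)t}\sum_{k,j=1}^{n(t)}\E\Bigl[\eee^{\s(x_k(t)+x_j(t))+i\rho\t(x_k(t)-x_j(t))+i\sqrt{1-\rho^2}\t(z_k(t)-z_j(t))}\Bigr]
\]
and split off the diagonal $k=j$, which by the many--to--one lemma contributes exactly $1$ after normalisation. For the off--diagonal term I would condition on the time $s\in[0,t]$ and position $y_0$ of the most recent common ancestor of the pair $(k,j)$; then $x_k(t)-x_j(t)$ and $z_k(t)-z_j(t)$ are both differences of two independent $N(0,t-s)$'s, while $x_k(t)+x_j(t)=2y_0+$ (sum of two independent $N(0,t-s)$'s). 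A direct Gaussian computation then gives the identity
\[
\E\Bigl[\eee^{\s(x_k+x_j)+i\rho\t(x_k-x_j)+i\sqrt{1-\rho^2}\t(z_k-z_j)}\,\Bigm|\,s,y_0\Bigr]
=\eee^{2\s y_0+(\s^2-\t^2)(t-s)},
\]
the crucial point being that the $\rho$--dependence cancels between the $X$ and the $Z$ factors. This is why $C_2,C_3$ will be independent of the correlation.

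\textbf{The many-to-two integral and asymptotics.} Averaging $y_0\sim N(0,s)$ and weighting ancestor pairs by the many--to--two density $K\,\eee^{s+2(t-s)}\,ds$, I obtain the exact identity
\[
\E\bigl[|N_{\s,\t}(t)|^2\bigr]=1+K\,\eee^{(1-\s^2-\t^2)t}\int_0^t\eee^{s(\s^2+\t^2-1)}\,ds.
\]
On $B_3$ and on $B_{2,3}\setminus\{(1+i)/\sqrt{2}\}$ one has $\s^2+\t^2>1$, so the integral evaluates to $(\eee^{(\s^2+\t^2-1)t}-1)/(\s^2+\t^2-1)$ and the whole expression converges to $C_2:=1+K/(\s^2+\t^2-1)>0$, giving \eqv(mom.1). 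On $B_{1,3}$ and at the triple point $(1+i)/\sqrt 2$ one has $\s^2+\t^2=1$ exactly, so the integral reduces to $t$ and $\E[|N_{\s,\t}(t)|^2]=1+Kt$; dividing by $t$ yields $\E[|\hat N_{\s,\t}(t)|^2]=t^{-1}+K\to K=:C_3>0$, giving \eqv(mom.1.1).

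\textbf{What is the hard part?} Essentially nothing in this proof is deep; it is a textbook BBM second--moment computation and does not even need the binary branching assumption, only $K<\infty$. The only conceptual observation worth isolating is the $\rho$--cancellation in the conditional expectation above, which eliminates the correlation parameter from the limiting answer and allows $B_3$, $B_{1,3}$, $B_{2,3}\setminus\{(1+i)/\sqrt2\}$ and the triple point to be handled uniformly. Once this is seen, the trichotomy ``convergent / linear / exponential'' for the elementary integral $\int_0^t\eee^{s(\s^2+\t^2-1)}\,ds$ tells the whole story.
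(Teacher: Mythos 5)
Your proof is correct and follows essentially the same route as the paper: the representation \eqv(cor.1) to integrate out the independent component $Z$, conditioning on the splitting time of the pair via the many--to--two density $K\eee^{q+2(t-q)}\,\ddd q$, the Gaussian computation in which the $\rho$--dependence cancels, and the resulting elementary integral whose behaviour is governed by the sign of $\s^2+\t^2-1$. The only (harmless) difference is that you keep the diagonal $k=j$ term explicitly, which adds $1$ to $C_2$ in case (i) and is negligible after the $t^{-1}$ rescaling in case (ii); the paper's displayed value $C_2=K/(\t^2+\s^2-1)$ silently omits this diagonal contribution, but since $C_2$ is only asserted to be some finite positive constant this does not affect the lemma.
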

\begin{proof}
\paragraph{(i)}
We have
\begin{align}
\Eq(mom.2)
\E\Big[\vert N_{\s,\t}(t) \vert^2\Big]
=\eee^{-2t\left(1/2+\s^2\right)}\E\Big[ 
\sum_{k,l=1}^{n(t)} \eee^{\s \left(x_k(t)+x_l(t)\right)+i\t \left(y_k(t)-y_l(t)\right)}
\Big].
\end{align}
Using Representation \eqv(cor.1), we rewrite the right-hand side of \eqv(mom.2) as
\begin{align}
\Eq(mom.3)
\eee^{-2t\left(1/2+\s^2\right)}\E\Big[ 
\sum_{k,l=1}^{n(t)} \eee^{\bar \l x_l(t)+\l x_k(t)+i\t \sqrt{1-\rho^2} (z_k(t)-z_l(t))}
\Big],
\end{align}
where $\l= \s+i\rho \t$ and $(z_k(t))_{k\leq n(t)}$ are the particles of a BBM
on $\mathbb{T}_t$ that is independent from $X(t)$. By conditioning on
$\mathcal{F}^{\mathbb{T}_t}$, we have that \eqv(mom.3) is equal
to
\begin{align}
\Eq(mom.4)
\eee^{-2t\left(1/2+\s^2\right)}\E\Big[ \eee^{-(1-\rho^2)\t^2\left(t-d(x_k(t),x_l(t))\right)}
\sum_{k,l=1}^{n(t)} \eee^{\bar \l x_l(t)+\l x_k(t)}
\Big].
\end{align}
The expectation in \eqv(mom.4) is equal to
\begin{align}\Eq(mom.5)
&K \int_0^t \ddd q ~ \eee^{2t-q-(1-\rho^2)\t^2(t-q)}\int_{-\infty}^{\infty} \frac{\ddd x}{\sqrt{2\pi q}} \int_{-\infty}^{\infty}\frac{\ddd y}{\sqrt{2\pi (t-q)}}\nonumber\\
&\quad\times \int_{-\infty}^{\infty}\frac{\ddd y'}{\sqrt{2\pi (t-q)}} \eee^{2\s x +\s(y+y')+i\t\rho (y-y')}\eee^{-\frac{y^2+y'^2}{2(t-q)}}\eee^{-x^2/2q}.
\end{align}
Computing first the integrals with respect to $y$ and $y'$, we get that \eqv(mom.5) is equal to
\begin{align}\Eq(mom.6)
&K \int_0^t \ddd q ~\eee^{2t-q-(1-\rho^2)\t^2(t-q)+(\s^2-\rho^2\t^2)(t-q)}\int_{-\infty}^{\infty} \frac{\ddd x}{\sqrt{2\pi q}}  \eee^{2\s x} \eee^{-x^2/2q}\nonumber\\
&= K \int_0^t \ddd q ~ \eee^{2t-q-\t^2(t-q)+\s^2(t-q)} \eee^{2\s^2q}.
\end{align}
Plugging \eqv(mom.6) back into \eqv(mom.4), we get that \eqv(mom.4) is equal to
\begin{align}\Eq(mom.7)
&\eee^{-2t\left(1/2+\s^2\right)}K \int_0^t \ddd q ~\eee^{2t-q-\t^2(t-q)+\s^2(t-q)} \eee^{2\s^2q}\nonumber\\
&=K \int_0^t \ddd q ~\eee^{(t-q)(1-\t^2-\s^2)} = K\int_0^t \ddd q' ~\eee^{q'(1-\t^2-\s^2)} \nonumber\\
&=\frac{K}{1-\t^2-\s^2} \left(\eee^{t(1-\t^2-\s^2)}-1\right)
.
\end{align}
As $t\uparrow \infty$, the term in \eqv(mom.7) converges to $\frac{K}{\t^2+\s^2-1}$, which we call $C_2$ from now on.

\paragraph{(ii)}  Proceeding as in Part (i), we get that 
\begin{align}
\Eq(mom.4.1)
\E \left[\vert \hat N_{\s,\t}(t)\vert^2\right]=t^{-1}\eee^{-2t\left(1/2+\s^2\right)}\E\Big[ \eee^{-(1-\rho^2)\t^2\left(t-d(x_k(t),x_l(t))\right)}
\sum_{k,l=1}^{n(t)} \eee^{\bar \l x_l(t)+\l x_k(t)}
\Big].
\end{align}
Plugging  \eqv(mom.6)  into \eqv(mom.4.1), we get that \eqv(mom.4.1) is equal to
\be\Eq(mom.7ak)
Kt^{-1}\int_0^t \ddd q ~\eee^{(t-q)(1-\t^2-\s^2)} =Kt^{-1}\int_0^t \ddd q = K,
\ee
since $\s^2+\t^2=1$ in $B_{1,3}$.

\end{proof}

\subsection{Constrained moment computation in $B_3$}

In this section, we continue our preparations for the proof of Theorem~\ref{clt.B3}. These consist of computing constrained moments. The following two Lemmata ensure that we can introduce the desired constraint.

\begin{lemma}\TH(Lem.const1)
 Let $\b\in B_3$. Then for all $\e>0$ and $\delta>0$, uniformly for all $t$ large enough, there exists $A_0$ such that for all $A>A_0$   
\be\Eq(con.1)
\P\Big\{\Big\vert\sum_{k=1}^{n(t)}e^{\s x_k(t)+i\t y_k(t)-\left(\frac{1}{2}+\s^2\right)t}\1_{\{x_k(t)>2\s t+A\sqrt{t}\}}\Big\vert>\delta\Big\}<\e
.
\ee
 \end{lemma}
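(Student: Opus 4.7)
The strategy is Chebyshev's inequality followed by a second moment estimate modeled on the proof of Lemma~\thv(Thm.secondmoment)(i), now with the indicator $\1_{\{x_k>2\s t+A\sqrt t\}}$ inserted. Let $N_A(t)$ denote the sum inside the probability in \eqv(con.1). By Chebyshev, $\P\{|N_A(t)|>\d\}\leq\d^{-2}\E[|N_A(t)|^2]$, so it suffices to show that $\limsup_{t\uparrow\infty}\E[|N_A(t)|^2]\to 0$ as $A\to\infty$.

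Expanding $|N_A(t)|^2$ as a double sum over $(k,l)$, using representation \eqv(cor.1) to integrate out the independent BBM $Z$ conditionally on $X$ and the tree (which produces the real factor $\eee^{-(1-\r^2)\t^2(t-q_{k,l})}$), and applying the many-to-two formula exactly as in \eqv(mom.2)--\eqv(mom.6), one obtains the splitting
\be
\E[|N_A(t)|^2]=\Phi^c(A)+\eee^{-(1+2\s^2)t}K\int_0^t\ddd q\,\eee^{2t-q-(1-\r^2)\t^2(t-q)}G_A(q),
\ee
where $\Phi^c(A):=\int_A^\infty\eee^{-u^2/2}\ddd u/\sqrt{2\pi}$, the diagonal ($k=l$) term reduces via many-to-one plus completing the square to exactly $\Phi^c(A)$, and
\be
G_A(q)=\int\frac{\ddd x}{\sqrt{2\pi q}}\eee^{2\s x-x^2/(2q)}|J(u_A-x)|^2,\quad J(v)=\int_v^\infty\frac{\ddd y}{\sqrt{2\pi(t-q)}}\eee^{\l y-y^2/(2(t-q))},
\ee
with $\l=\s+i\r\t$ and $u_A=2\s t+A\sqrt t$. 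The $x$-Gaussian is concentrated near $x=2\s q+O(\sqrt q)$ where after the shift $x=2\s q+v\sqrt q$ the $x$-factor becomes $\eee^{2\s^2q}\phi(v)\,\ddd v$ with $\phi$ standard normal.

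The key technical step is bounding $|J(v)|^2$ in a way that preserves the $\r$-dependent cancellation that drove the proof of Lemma~\thv(Thm.secondmoment)(i). Completing the square in $y$ and substituting $w=(y-\l(t-q))/\sqrt{t-q}$ gives $|J(v)|^2=\eee^{(\s^2-\r^2\t^2)(t-q)}|\widetilde\Phi^c(a-ib)|^2$, with $a(x)=(u_A-x-\s(t-q))/\sqrt{t-q}$, $b=\r\t\sqrt{t-q}$, and $\widetilde\Phi^c$ the entire extension (via contour integration) of $\Phi^c$. For the Gaussian-typical $x=2\s q+O(\sqrt q)$ dominating the $x$-integral, $a\approx\s\sqrt{t-q}+A\sqrt{t/(t-q)}\to+\infty$ as $A\to\infty$, placing us in the large-$a$ asymptotic regime $|\widetilde\Phi^c(a-ib)|^2\lesssim\eee^{-a^2+b^2}/(a^2+b^2)$. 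The $\eee^{b^2}=\eee^{\r^2\t^2(t-q)}$ exactly cancels the $\eee^{-\r^2\t^2(t-q)}$ contained in the prefactor, leaving the clean $|J(v)|^2\lesssim\eee^{\s^2(t-q)-a(x)^2}/(a^2+b^2)$. Combined with the outer $\eee^{-(1-\r^2)\t^2(t-q)}$, the $\eee^{2\s^2q}$ from the $x$-integral, and the normalization $\eee^{-(1+2\s^2)t}$, the overall exponent collapses to $-(\s^2+\t^2-1)(t-q)-2\s A\sqrt t-A^2t/(t-q)+O(1)$; since $\s^2+\t^2>1$ in $B_3$, the $q$-integral converges and the total is $O(\eee^{-c\,A\sqrt t})$ for some $c>0$, vanishing uniformly in $t$ as $A\to\infty$.

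The main obstacle will be establishing the asymptotic bound on $|\widetilde\Phi^c(a-ib)|^2$ uniformly across the parameter range, in particular in the intermediate regime $a\lesssim|b|$ (which occurs when $t-q\sim A\sqrt t/|\r\t|$), where the large-$a$ expansion is not directly valid. To handle this I would instead deform the contour defining $J(v)$ from the real axis $[v,\infty)$ to the shifted horizontal line $\R+i\r\t(t-q)$, decomposing $J=J_h+J_{\mathrm{vert}}$; the horizontal piece contributes exactly $|J_h|=\eee^{(\s^2-\r^2\t^2)(t-q)/2}\Phi^c(a)$, realizing the desired cancellation explicitly, while the short vertical segment is of length $|\r\t|(t-q)$ and can be bounded using the maximum of the real part of the integrand on the segment and then absorbed by the Gaussian weight on $x$. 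Together with routine Gaussian tail estimates for the atypical-$x$ region, this yields the required uniform bound, completing the proof.
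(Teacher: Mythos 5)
Your opening moves match the paper's: Chebyshev plus a many-to-two second-moment computation with the indicator carried along, and your reduction of the diagonal term to $\Phi^c(A)$ and your identification of the truncated oscillatory integral $J(v)$ as the crux are both correct. The problem is the final exponent bookkeeping, and the slip hides a real obstruction. Write $s=t-q$ and assemble the factors you list: the normalization $\eee^{-(1+2\s^2)t}$, the many-to-two weight $\eee^{2t-q}$, the factor $\eee^{-(1-\r^2)\t^2 s}$ from integrating out $Z$, the $\eee^{2\s^2q}$ from the tilted $x$-integral, and your (correct) asymptotic $|J|^2\asymp \eee^{\s^2 s-a^2}/(a^2+b^2)$ with $a^2=\s^2 s+2\s A\sqrt t+A^2t/s+\dots$ for typical $x$. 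The coefficient of $s$ in the resulting exponent is $1-2\s^2-(1-\r^2)\t^2$, not $-(\s^2+\t^2-1)=1-\s^2-\t^2$; the two differ by $\r^2\t^2-\s^2$ (to get your value one would need an outer factor $\eee^{-\t^2 s}$ rather than $\eee^{-(1-\r^2)\t^2 s}$). Whenever $\r^2\t^2>2\s^2+\t^2-1$ --- a nonempty range of $\r$ for every $\b\in B_3$, since $2\s^2<1<\s^2+\t^2$ --- the corrected coefficient is positive, the $q$-integral is dominated by $q\approx 0$, and your bound is of order $\eee^{(1-2\s^2-(1-\r^2)\t^2)t-2\s A\sqrt t}$, which diverges as $t\uparrow\infty$ for fixed $A$. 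This is not an artifact of a lossy vertical-segment estimate: in the regime $a\lesssim |b|$ the truncated integral genuinely satisfies $|J(v)|^2\asymp \eee^{\s^2 s-a^2}/(a^2+\r^2\t^2 s)$, which exceeds the unconstrained value $\eee^{(\s^2-\r^2\t^2)s}$ exactly when $a^2<\r^2\t^2 s$ --- the sharp cutoff destroys the phase cancellation and leaves an uncancelled endpoint contribution --- and your vertical piece reproduces precisely this, so it cannot be ``absorbed.''

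Moreover, no second-moment argument can close this gap uniformly in $\r$: writing $N_A(t)$ for the sum in \eqv(con.1), a single integration by parts in the truncated Gaussian integral (the boundary term at $x=2\s t+A\sqrt t$ does not cancel) gives $|\E[N_A(t)]|\asymp \eee^{(\frac12-\s^2-\frac{(1-\r^2)\t^2}{2})t-\s A\sqrt t-A^2/2}\big/\sqrt{(\s^2+\r^2\t^2)t}$, so $\E[|N_A(t)|^2]\geq |\E[N_A(t)]|^2\to\infty$ once $\r^2>(2\s^2+\t^2-1)/\t^2$ (e.g.\ for $\b=i\t$ with $\t>1$ and $\r=1$ one gets $|\E[N_A(t)]|\sim \eee^{t/2-A^2/2}/(\t\sqrt{2\pi t})$). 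So your strategy of showing $\sup_t\E[|N_A(t)|^2]\to0$ as $A\to\infty$ is only viable for $\r^2\t^2\leq\s^2$, where your exponent is in fact a valid upper bound. For comparison, the paper argues much more softly: it splits the branching-time integral at $q=t-r$, disposes of $q<t-r$ by appealing to the unconstrained computation of Lemma~\thv(Thm.secondmoment), and handles $q>t-r$ by noting that the constraint forces the common ancestor to be atypically high. Your more explicit route exposes that the early-branching part is \emph{not} controlled by the unconstrained computation once the indicator is inserted; that difficulty is genuine and is present, unaddressed, in the paper's own one-line treatment of that region as well.
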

 \begin{proof}
Using a second moment Chebyshev inequality, we bound the probability in \eqv(con.1) from above by
 \be\Eq(con.2)
 \eee^{-2t\left(1/2+\s^2\right)}\E\Big[ 
\sum_{k,l=1}^{n(t)} \eee^{\s (x_k(t)+x_l(t))+i\t (y_k(t)-y_l(t))}\1\{x_k(t),x_l(t)>2\s t+A\sqrt{t}\}
\Big].
 \ee
Continuing as in the proof of Lemma~\thv(Thm.secondmoment), we rewrite \eqv(con.2) as
\be\Eq(con.3)
\eee^{-2t\left(1/2+\s^2\right)}\E\Big[ \eee^{-(1-\rho^2)\t^2\left(t-d(x_k(t),x_l(t))\right)}
\sum_{k,l=1}^{n(t)} \eee^{\bar \l x_l(t)+\l x_k(t)}\1\{x_k(t),x_l(t)>2\s t+A\sqrt{t}\}
\Big].
 \ee
We rewrite the expectation in \eqv(con.3)
\begin{multline}
\Eq(con.4)
K \int_0^t \ddd q ~ \eee^{2t-q-(1-\rho^2)\t^2(t-q)}\int_{-\infty}^{\infty} \frac{\ddd x}{\sqrt{2\pi q}} \int^{\infty}_{2\s t+A\sqrt{t}-x}\frac{\ddd y}{\sqrt{2\pi (t-q)}}
\\
\quad\times \int_{2\s t-x}^{\infty}\frac{\ddd y'}{\sqrt{2\pi (t-q)}} \eee^{2\s x+A\sqrt{t} +\s(y+y')+i\t\rho (y-y')}\eee^{-\frac{y^2+y'^2}{2(t-q)}}\eee^{-x^2/2q}.
\end{multline}
Observe that by the computations in Lemma~\thv(Thm.secondmoment) for $r$ sufficiently large
\begin{multline}
\Eq(con.5)
K \int_0^{t-r}\ddd q ~ \eee^{2t-q-(1-\rho^2)\t^2(t-q)}\int_{-\infty}^{\infty} \frac{\ddd x}{\sqrt{2\pi q}} \int^{\infty}_{2\s t+A\sqrt{t}-x}\frac{\ddd y}{\sqrt{2\pi (t-q)}}
\\
\quad\times \int_{2\s t+A\sqrt{t}-x}^{\infty}\frac{\ddd y'}{\sqrt{2\pi (t-q)}} \eee^{2\s x +\s(y+y')+i\t\rho (y-y')}\eee^{-\frac{y^2+y'^2}{2(t-q)}}\eee^{-x^2/2q}< \e/2.
\end{multline}
Hence, it suffices to take consider the integration domain $q>t-r$. Now, $\P(y>r)<\eee^{-r/2}$.
To have $x+y>2\s t+A\sqrt t$ on that event, $x>2\s t + A\sqrt t-r$ must hold. By
inserting this into the second moment, we have that the bound is smaller than
$\e/2$ for $A$ sufficiently large.
\end{proof}

\begin{lemma}\TH(Lem.const2)
Let $\b\in B_3$, $\rho\in[-1,1]$ and $\g>\frac{1}{2}$. Let $A>0$. 
Then, for all $\e>0$ and $d>0$, there exists $r_0>0$ such that, for all $r>r_0$, uniformly for all $t$ sufficiently large,
\begin{multline}\Eq(con.10)
\P\Big\{\Big\vert\sum_{k=1}^{n(t)}\eee^{\s x_k(t)+i\t y_k(t)-\left(\frac{1}{2}+\s^2\right)t}
\\
\times \1\{x_k(t)<2\s t+A\sqrt{t}, \exists s \in [r,t]\colon x_k(s)>2\s s+s^{\gamma}\}\Big\vert>\delta\Big\}
<\e
.
\end{multline}
\end{lemma}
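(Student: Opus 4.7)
The plan is to apply Chebyshev's inequality with the second moment of the constrained partition function, following the blueprint of Lemma~\thv(Lem.const1), and to extract from the new barrier-crossing constraint a factor that is $o(1)$ as $r\uparrow\infty$, uniformly in $t$. Concretely, setting
\be
T_{r,A,\g}(t) := \sum_{k=1}^{n(t)} \eee^{\s x_k(t)+i\t y_k(t)-(\frac{1}{2}+\s^2)t}\, \1_{E_k(t)},
\ee
with $E_k(t) := \{x_k(t)<2\s t+A\sqrt t,\ \exists s \in [r,t]\colon x_k(s) > 2\s s + s^{\g}\}$, it suffices to show $\E[|T_{r,A,\g}(t)|^2]\to 0$ as $r\uparrow\infty$, uniformly in $t\ge r$, and then invoke Markov's inequality.

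Expanding $|T_{r,A,\g}(t)|^2$ as a double sum over $(k,l)$, conditioning on $\FF^{\mathbb{T}_t}$, using Representation~\eqv(cor.1) to integrate out the independent BBM $Z$ exactly as in \eqv(mom.3)--\eqv(mom.4), and decomposing by the branching time $q_{k,l}=d(x_k(t),x_l(t))$, one obtains a formula of the same shape as \eqv(mom.5), namely an integral over $q\in[0,t]$ and over the common ancestor position $x_q$ of a product of a pre-split Gaussian factor and a squared modulus $|J(x_q,q,t)|^2$ coming from the two independent post-split BM segments, with $|J|^2$ now restricted by the indicator of the event $E_k$ on each segment. The proof then splits the event according to whether the barrier-crossing time $s$ occurs before or after the branching time $q$.

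Case A ($s\in[r,q]$): the excursion occurs on the shared pre-split segment, so the constraint ties the pair $(k,l)$ together. Conditionally on the pre-split Brownian bridge ending at $x_q\approx 2\s q$, the probability that it ever exceeds the curve $2\s s+s^{\g}$ with $s\ge r$ admits a Gaussian tail bound of the form $\sum_{s\ge r}\eee^{-c\,s^{2\g-1}}$, which is summable and vanishes as $r\uparrow\infty$ since $\g>1/2$. Integrating against the pre-split density $(2\pi q)^{-1/2}\eee^{-x_q^2/2q+2\s x_q}$ then produces, uniformly in $q$, the unconstrained contribution from \eqv(mom.6) multiplied by this small factor. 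Case B ($s\in(q,t]$): the excursion lies on one of the two independent post-split BMs. Change variables $S:=B_1+B_2$, $D:=B_1-B_2$ in the two BMs so that $\l B_1+\bar\l B_2=\s S+i\rho\t D$; integrating out $D$ in the unconstrained direction reproduces the factor $\eee^{(\s^2-\rho^2\t^2)(t-q)}$ seen in \eqv(mom.6) and avoids the loss that a naive triangle inequality would incur when $\rho\neq 0$. Performing a Girsanov tilt of drift $\s$ on $S$ then reduces the residual indicator to the probability that a standard BM $\tilde B$ satisfies $\tilde B(u)>u^{\g}+O(1)$ for some $u\in[(r-q)_+,t-q]$ while $\tilde B(t-q)=O(\sqrt t)$, which again is bounded by the Gaussian sum $\sum_{u\ge r-q}\eee^{-c u^{2\g-1}}$ and tends to $0$ as $r\uparrow\infty$ uniformly in $q,t$. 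Combining Cases A and B and integrating in $q$ as in the passage from \eqv(mom.6) to \eqv(mom.7) yields the desired bound $\E[|T_{r,A,\g}(t)|^2]\le o_{r\to\infty}(1)\cdot C_2$.

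The main obstacle I expect lies in making the Brownian tail estimate of Case B uniform when $q$ is close to $t$, where $t-q$ is small and $\tilde B$ has little room to return to an $A\sqrt t$-ball after a high excursion. This is handled by splitting the $q$-integration at $q=t-r$: for $q\le t-r$ the above argument applies directly, while for $q>t-r$ one must exploit that the barrier $s^{\g}$ with $s\in[q,t]\subset [t-r,t]$ is already of order $t^{\g}\gg \sqrt t$, so the post-split BM segment has to drop by more than $t^{\g}-A\sqrt t$ in time $t-q\le r$, an event whose probability is bounded by $\eee^{-c\,t^{2\g-1}/r}$, negligible after multiplication by the ambient exponential factors. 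The $D$-integration trick together with this two-regime $q$-split is the technical heart of the argument.
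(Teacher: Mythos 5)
Your opening moves (Chebyshev's inequality, the double-sum expansion of the second moment, integrating out the independent copy $Z$, and decomposing over the branching time $q$) coincide with the paper's, and your Case A is essentially the paper's key step: the smallness is extracted from the probability that the pre-split Brownian bridge crosses the curve $2\s s+s^{\g}$, $s\ge r$, exactly as in your Gaussian-tail sum. The structural difference is that the paper never runs a barrier argument on the post-split segments for $q$ in the bulk: it first drops the path constraint on one of the two particles (\eqv(ny.40) to \eqv(ny.41)), then restricts the $q$-integration to $q>t-r_1$ by observing, as in Lemma~\thv(Thm.secondmoment), that the second-moment integrand $\eee^{(t-q)(1-\s^2-\t^2)}$ makes the integral over $q\le t-r_1$ smaller than $\e/3$ once $r_1$ is large, and only then extracts the bridge-crossing probability \eqv(ny.45). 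On the surviving range $t-q\le r_1$ the post-split segments have bounded length, so no cancellation needs to be preserved there.

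This matters because your Case B for $q\le t-r$ contains a genuine gap. The barrier event there is an event about one individual post-split path $B_1=(S+D)/2$, so it is \emph{not} independent of $D=B_1-B_2$, and you cannot ``integrate out $D$ in the unconstrained direction''; after integrating out $D$ there is no residual indicator depending on $S$ alone to which a Girsanov tilt could be applied. Without that decoupling the only available bound is $\vert\E[\eee^{\l B_1}\1_{E}]\vert\le\E[\eee^{\s B_1}\1_{E}]$, which forfeits the factor $\eee^{-\rho^2\t^2(t-q)/2}$ carried by the constrained segment; redoing the bookkeeping of \eqv(mom.5)--\eqv(mom.7) with this loss produces the exponent $(t-q)\left(1-\s^2-\t^2+\rho^2\t^2/2\right)$, which is strictly positive for $|\rho|$ near $1$ on part of $B_3$ (for instance $\s$ near $0$ and $\t^2$ slightly above $1$). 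Since the crossing probability you gain is small in $r$ (of order $\eee^{-c r^{2\g-1}}$) but does not decay in $t-q$, the Case-B bound integrated over $q\le t-r$ diverges as $t\uparrow\infty$. The repair is exactly the paper's route: for $q\le t-r_1$ do not invoke the crossing constraint at all, since that range already contributes $O(\eee^{-r_1(\s^2+\t^2-1)})$ by the second-moment computation with the oscillation intact; reserve the barrier estimate for the shared pre-split bridge and for the short post-split segments with $t-q\le r_1$, where your ``must drop by $t^{\g}-A\sqrt t$ in time at most $r_1$'' observation is precisely what is needed.
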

\begin{proof}
We use again a second moment bound. Similarly to the proof of Lemma~\thv(Lem.const1), we bound the probability in \eqv(con.10) from above by
\begin{multline}
\Eq(ny.40)
\eee^{-2t\left(1/2+\s^2\right)}\E\Big[ \eee^{-(1-\rho^2)\t^2\left(t-d(x_k(t),x_l(t))\right)}
\sum_{k,l=1}^{n(t)} \eee^{\bar \l x_l(t)+\l x_k(t)} 
\\
\times \1\{x_k(t),x_l(t)<2\s t+A\sqrt{t}, \exists s \in [r,t]\colon x_k(s)>2\s s+s^{\gamma}, 
\\
\exists s' \in [r,t]\colon x_l(s')>2\s s'+(s')^{\gamma}\}\Big].
\end{multline}
By only keeping track of the path event for one of the particles, we get that \eqv(ny.40) is bounded from above by
\begin{multline}
\Eq(ny.41)
\eee^{-2t\left(1/2+\s^2\right)}\E\Big[ \eee^{-(1-\rho^2)\t^2\left(t-d(x_k(t),x_l(t))\right)}
\sum_{k,l=1}^{n(t)} \eee^{\bar \l x_l(t)+\l x_k(t)}
\\
\times
\1\{x_k(t),x_l(t)<2\s t+A\sqrt{t}\;\exists s \in [r,t]\colon x_k(s)>2\s s+s^{\gamma}\}
\Big].
\end{multline}
We rewrite \eqv(ny.41) as
\be\Eq(ny.42)
\begin{aligned}
 & K \int_0^t \dd q ~ \eee^{2t-q} \eee^{2t-q-(1-\rho^2)\t^2(t-q)} \E\left[ \eee^{\bar \l x_1(t)+\l(x_1(q)+ x_2(t-q))} \right.
\\
 & \left.  \times \1 \{x_1(t),x_1(q)+x_2(t-q)<2\s t+A\sqrt{t}, \exists s \in [r,t]\colon x_1(s)>2\s s+s^{\gamma}\} \right]
 ,
 \end{aligned}
\ee
where $x_1(\cdot)$ is a standard Brownian motion and $x_2(t-q)$ is an independent $\mathcal{N}(0,t-q)$ distributed random variable.
Calculation of the expectation in \eqv(ny.42) with respect to $x_2(t-q)$ yields 
\begin{multline}
\Eq(ny.43)
K \int_0^t \ddd q~ \eee^{2t-q}  \eee^{-\frac{\lambda^2}{2(t-q)}}\eee^{2t-q-(1-\rho^2)\t^2(t-q)} \E\left[\eee^{\bar \l x_1(t)+\l x_1(q)}
\right.
\\
\times \left. \1\{x_1(t)<2\s t+A\sqrt{t}, \exists s \in [r,t]\colon x_1(s)>2\s s+s^{\gamma}\} \right] 
.
\end{multline}
As in the proof of Lemma~\thv(Lem.const1), we can first choose $r_1$ large enough such that the above integral from $0$ to $t-r_1$ is bounded by $\e/3$. Moreover, $x_1(t)=x_1(q)+ \tilde{x}(t-q)$, where $\tilde{x}(t-q)$ is normal distributed with mean zero and variance $t-q$ that is independent from $x_1(s)$ for $s \leq q$. Then, for all $R>R_2$, 
\be\Eq(sf.1)
\P\{\vert\tilde{x}(t-q)\vert > R \}<\frac{\e}{3}.
\ee

Observe that the intersection of the event $\{\tilde{x}(t-q)>R\}$  and the event in the indicator in \eqv(ny.43)  is contained in the event 
\begin{multline}
\{x_1(t)<2\s t+A\sqrt{t}, \exists s \in [r,t]\colon x_1(s)>2\s s+s^{\gamma},\tilde{x}(t-q)>R\}
\\
\subset \Big\{x_1(s)-\frac{s}{q}x_1(q) <s^{\gamma}-\frac{(A\sqrt{t}-R) s}{q} \Big\}.
\end{multline} 
Using that $x_1(s)-\frac{s}{q}x_1(q)=\xi_k(s)$ is a Brownian bridge that is independent from $x_1(q)$ and also from $\tilde{x}(t-q)$, we bound \eqv(ny.43) from above by
\begin{multline}
\Eq(ny.44)
K \int_0^t \dd q ~ \eee^{2t-q} \eee^{-\frac{\lambda^2}{2(t-q)}}\eee^{2t-q-(1-\rho^2)\t^2(t-q)} \E\left[\eee^{\bar \l x_1(t)+\l x_1(q)}  \right]
\\
\quad\times \P\Big\{\exists s \in[r,t-r]\colon \xi(s)>s^{\gamma}-\frac{(A\sqrt{t}-R) s}{q}\Big\}
.
\end{multline}
By the same computations as in \eqv(mom.5) and \eqv(mom.6), we can bound \eqv(ny.44) from above by
\be \Eq(ny.45)
C_2 \P\Big\{\exists s \in[r,t-R-r]\colon \xi(s)>s^{\gamma}-\frac{(A\sqrt{t}-R) s}{t-R}\Big\}
.
\ee
 It is a well known fact for Brownian bridges (see, e.g., \cite[Lemma~2.3]{BovHar13} for a precise statement) that by choosing $r$ sufficiently large
\eqv(ny.45) can be made as small as we want. This finishes the proof of Lemma~\thv(Lem.const2).
\end{proof}
Define
\begin{multline}\Eq(con.20)
N_{\s,\t}^{c,A}(t) := \sum_{k=1}^{n(t)} \eee^{-t(1/2+\s^2 )} \eee^{\s x_k(t)+i\t y_k(t)}
\\
\times\1\{x_k(t)<2\s t+A\sqrt{t}, \forall s \in [r,t]\colon x_k(s)\leq2\s s+s^{\gamma}\}
.
\end{multline}
The following lemma provides the asymptotics for all moments of \eqref{con.20} in the $t \to \infty$ limit.
\begin{lemma}[Moment asymptotics]\TH(Lem.const3) 
Consider a branching Brownian motion with binary splitting. 
For $\b\in B_3$, for any $A>0$
\be\Eq(con.100)
\lim_{t\to\infty}\E\left[\left\vert N_{\s,\t}^{c,A}(t)\right\vert^2\right]=C_{2,A}
,
\ee
with $\lim_{A\to\infty} C_{2,A}=C_2$ and, for $k\in \N$, we have
\be \Eq(con.101)
\lim_{r\uparrow \infty}\lim_{t\to \infty} \E\left[\left\vert N_{\s,\t}^{c,A}(t)\right\vert^{2k} ~\vert~ \mathcal{F}_r\right] =k! (C_{2,A}\M_{2\s,0})^k \quad \mbox{a.s. and in }L^1.
\ee
Moreover, for $k'<k$,
\be\Eq(con.102)
\lim_{r\uparrow \infty}\lim_{t\to \infty} \E\left[N_{\s,\t}^{c,A}(t)^k\overline{ N_{\s,\t}^{c,A}(t)}^{k'} ~\big\vert~ \mathcal{F}_r \right] =0\quad \mbox{a.s. and in }L^1.
\ee
 
\end{lemma}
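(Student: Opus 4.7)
Part~(i) I would obtain by directly adapting the second-moment computation of Lemma~\thv(Thm.secondmoment). I insert the indicator $\1\{x_k(t), x_l(t) < 2\s t + A\sqrt{t},\ \forall s\in[r,t]\colon x_k(s), x_l(s) \leq 2\s s + s^{\g}\}$ into the expectation in~\eqv(mom.2) and repeat the steps leading to~\eqv(mom.7). The position cutoff truncates the outer Gaussian integral in~\eqv(mom.5), while the path cutoff restricts the effective MRCA time $q$ to a neighbourhood of $t$ via a Brownian bridge estimate as in the proof of Lemma~\thv(Lem.const2). The resulting integral is uniformly bounded in $t$ and converges as $t\uparrow\infty$ to a finite positive constant $C_{2,A}$ depending on $A$ and $\g$ but not on $\rho$. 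As $A\uparrow\infty$, Lemmas~\thv(Lem.const1) and~\thv(Lem.const2) imply that the discarded mass vanishes, yielding $C_{2,A}\uparrow C_2$.

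For parts~(ii) and (iii), I condition on $\FF_r$ and use the branching property to decompose
\[
N_{\s,\t}^{c,A}(t)=\sum_{j=1}^{n(r)} b_j(r)\,\hat N^{(j)}(t-r),\qquad b_j(r):=\eee^{-r(1/2+\s^2)}\eee^{\s x_j(r)+i\t y_j(r)},
\]
where the $\hat N^{(j)}(t-r)$ are conditionally i.i.d.\ given $\FF_r$, each distributed like $N_{\s,\t}^{c,A}(t-r)$ except that its path constraint is shifted by the parent position $x_j(r)$ (which, for typical $x_j(r)=O(\sqrt{r})$, has vanishing influence as $t\to\infty$); one verifies directly that $\sum_j|b_j(r)|^2=\M_{2\s,0}(r)$. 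Expanding the mixed $(k,k')$-moment by the multinomial theorem, grouping the $k$ un-conjugated and $k'$ conjugated indices by their subtrees, and using conditional independence across subtrees, the expectation reduces to a sum of products of subtree-level mixed moments $\E\bigl[(\hat N^{(j)})^{m_1}(\overline{\hat N^{(j)}})^{m_2}\bigm|\FF_r\bigr]$, weighted by $b_j(r)^{m_1}\bar b_j(r)^{m_2}$ and multinomial coefficients.

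The crux is then the Wick-type identity
\[
\lim_{t\to\infty}\E\bigl[(\hat N^{(j)}(t-r))^{m_1}(\overline{\hat N^{(j)}(t-r)})^{m_2}\bigm|\FF_r\bigr] = \d_{m_1,m_2}\cdot m_1!\cdot C_{2,A}^{m_1},
\]
which I would prove by induction on $m_1+m_2$. The cases $(1,0)$ and $(0,1)$ follow from the first-moment estimate $\E[\hat N^{(j)}(t-r)\mid\FF_r]=O(\eee^{(t-r)(1-\s^2-\t^2)/2})$, which vanishes exponentially in $B_3$; $(1,1)$ is part~(i); $(2,0)$ (and its conjugate) follows from a direct Gaussian computation analogous to~\eqv(mom.5)-\eqv(mom.6) but with both indices un-conjugated, which produces a prefactor $\eee^{(t-r)(1-\s^2-\t^2)}$ that vanishes in $B_3$. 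The inductive step iterates the branching decomposition inside the subtree, using Lemmas~\thv(Lem.const1) and~\thv(Lem.const2) to suppress contributions from atypical particles. Given the identity, only multi-indices with $m_j=m'_j$ for every $j$ survive in the limit, which forces $k=k'$ and proves~\eqv(con.102); for $k=k'$, the surviving multinomial sum collapses to $k!\bigl(C_{2,A}\sum_j|b_j(r)|^2\bigr)^k=k!(C_{2,A}\M_{2\s,0}(r))^k$, which converges a.s.\ and in $L^1$ to $k!(C_{2,A}\M_{2\s,0})^k$ by the martingale convergence of $\M_{2\s,0}$. The principal obstacle is the Wick identity itself: showing that subtree-level moments with $m_1\neq m_2$ vanish requires careful combinatorial accounting over the branching genealogy, with the two constraints doing the essential work of killing the rare particle configurations that would otherwise spoil the complex-circular Gaussian factorization.
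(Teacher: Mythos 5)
Your plan for \eqv(con.100) matches the paper (insert the indicators into the computation of Lemma~\thv(Thm.secondmoment), use Lemmas~\thv(Lem.const1)--\thv(Lem.const2) to get $C_{2,A}\to C_2$), and the decomposition at time $r$ with $\sum_j|b_j(r)|^2=\M_{2\s,0}(r)$ is the right starting point for the higher moments. But the identity you call the crux is false for $m_1=m_2\geq 2$, and it contradicts the very statement you are proving. Since $\hat N^{(j)}(t-r)$ is independent of $\FF_r$, your Wick identity asserts $\lim_t\E\bigl[|N^{c,A}_{\s,\t}(t-r)|^{2m}\bigr]=m!\,C_{2,A}^{m}$, a deterministic limit. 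Taking expectations in \eqv(con.101) (the convergence there is claimed in $L^1$) gives instead $\lim_r\lim_t\E\bigl[|N^{c,A}_{\s,\t}(t)|^{2m}\bigr]=m!\,C_{2,A}^{m}\,\E[\M_{2\s,0}^{m}]$, and $\E[\M_{2\s,0}^{m}]>1$ for $m\geq 2$ because $\M_{2\s,0}$ is a non-degenerate mean-one random variable. The random variance does not disappear at the subtree level; consequently a subtree receiving $m_j\geq 2$ conjugate pairs does not contribute the constant $m_j!\,C_{2,A}^{m_j}$, your multinomial sum does not collapse to $k!\bigl(C_{2,A}\sum_j|b_j(r)|^2\bigr)^k$ for fixed $r$, and your induction on $m_1+m_2$ cannot close as described. (Your treatment of the unbalanced cases $(1,0)$ and $(2,0)$ is fine and parallels the paper's \eqv(ny.31).)

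What is true, and what the paper actually proves, is that the configurations in which some subtree below time $r$ carries more than one conjugate pair become negligible only in the second limit $r\uparrow\infty$: their weight carries a factor $\prod_j|b_j(r)|^{2m_j}$ with some $m_j\geq 2$, which is killed by $\max_j|b_j(r)|^2\to 0$ a.s. In the surviving configurations each subtree carries exactly one pair, so only the subtree second moment $C_{2,A}$ is ever needed, and counting the pairings produces the $k!$. The paper implements this by expanding over $2k$-tuples of leaves, building a greedy matching by genealogical depth, and using Lemma~\thv(Lem.help) to show that any stretch of the genealogical skeleton carrying multiplicity $m\geq 3$ is subdominant --- this is precisely where the truncations $x_k(t)<2\s t+A\sqrt t$ and $x_k(s)\leq 2\s s+s^{\g}$ enter, a point your plan leaves vague (``suppress contributions from atypical particles''). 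Your route can be repaired by replacing the false Wick identity with (a) the vanishing of subtree mixed moments with $m_1\neq m_2$, plus (b) uniform-in-$t$ bounds on subtree moments together with the a.s.\ vanishing, as $r\uparrow\infty$, of all diagonal terms having some $m_j\geq 2$; but (b) is a substantive additional argument, not a consequence of the induction you describe.
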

\begin{proof}
We proceed by induction over $k \in \N$.
For $k=1$, we observe that the claim follows directly from Lemma~\thv(Thm.secondmoment) together with the second moment computation done in the proof of Lemma~\thv(Lem.const1) and Lemma~\thv(Lem.const2).

To bound the $2k$-moment, we rewrite \eqv(con.101) as
\be\Eq(ny.1) 
\begin{aligned}
\E\Big[\sum_{l_1,\dots, l_{2k}\leq n(t)}\prod_{j=1}^{2k} & \eee^{-t(1/2+\s^2 )} \eee^{\s x_{l_j}(t)+i\t y_{l_j}(t)}
\\
&
\times \1\{x_{l_j}(t)<2\s t+A\sqrt{t}, \forall s \in [r,t] \colon x_{l_j}(s)\leq2\s s+s^{\gamma}\}\Big] 
.
\end{aligned}
\ee
  For $l_1,\dots, l_{2k}\leq n(t)$, we can find a matching using the following algorithm:
  \begin{itemize}
  \item[1.] Choose the two labels $j,j'$ such that $d(x_{l_j},x_{l_{j'}})$ is maximal. Call them $l_1$ and $l_{\sigma(1)}$ from know on.
  \item[2.] Delete them.
  \item[3.]  Pick $l_j$ in the remaining set and match it with the remaining $l_{j'}$ such that $d(x_{l_j},x_{l_{j'}})$ is maximal. Iterate.
    \end{itemize}
    The pairs obtained in this way we call $(l_1,l_{\s(1)}), \dots, (l_k, l_{\s(k)})$. 
    We rewrite \eqv(ny.1) as
    \be\Eq(ny.2)
    \begin{aligned}
    &\E\Big[\sum_{l_2,\dots, l_{k}\leq n(t)}\prod_{j=2}^{k} \eee^{-t(1+2\s^2 )} \eee^{\s \left(x_{l_j}(t)+x_{l_{\s(j)}}(t)\right)+i\t \left(y_{l_j}(t)+y_{l_{\s(j)}}(t)\right)}
    \\
    & \quad \times
    \1\{ x_{l_{\s(j)}}(t),x_{l_j}(t)<2\s t+A\sqrt{t}, \forall s \in [r,t] \colon x_{l_{\s(j)}}(s), x_{l_j}(s) \leq2\s s+s^{\gamma}\} 
    \\
    &  \quad \times \eee^{-t(1+2\s^2 )} \eee^{\s (x_{l_1}(t)+x_{l_{\s(1)}}(t))+i\t (y_{l_1}(t)-y_{l_{\s(1)}}(t))}
    \\
   & \quad \times \1\{x_{l_{\s(1)}}(t),x_{l_1}(t)<2\s t+A\sqrt{t}, \forall s \in [r,t] \colon  x_{l_{\s(1)}}(s),x_{l_1}(s)\leq2\s s+s^{\gamma}\}
 \Big]
 .
    \end{aligned}
    \ee
    Using \eqv(cor.1), we can rewrite for $j\in\{1,\s(1)\}$    
    \be \Eq(ny.3)
    y_{l_j}(t)= \rho y_{l_j}(t)+ \sqrt{1-\rho^2} z_{l_j}(t),
    \ee
     where $(z_k(t))_{k\leq n(t)}$ are particles of a BBM on the same Galton-Watson tree as $(x_k(t))_{k\leq n(t)}$ but independent from it. 
     Observe that using  the requirement that $d(x_{l_1},x_{l_{\s_1}})$ is chosen maximal, we have  
     \be \Eq(ny.4)
     \begin{aligned}
     i\t (y_{l_1}(t)-y_{l_{\s(1)}}(t))&=i\sqrt{1-\rho^2} \tau \left(z_{1}(t-d(x_{l_1}(t),x_{l_{\s_1}}(t)))-z_{2}(t-d(x_{l_1}(t),x_{l_{\s_1}}(t))\right)
     \\
   & \quad+i \tau \rho \left(x_{l_1}(t)-x_{l_{\s(1)}}(t)\right),
   \end{aligned}
     \ee
     where $z_1, z_2$ are two independent $\mathcal{N}(0,(t-d(x_{l_1}(t),x_{l_{\s_1}}(t)))) $-distributed random variables.
     Plugging \eqv(ny.4) into \eqv(ny.2) and computing the expectation with respect to $z_1,z_2$, we obtain
     \be\Eq(ny.5)
     \begin{aligned}
    &\E\Big[\sum_{l_2,\dots, l_{k}\leq n(t)}\prod_{j=2}^{k} \eee^{-t(1+2\s^2)} \exp\left(\s (x_{l_j}(t)+x_{l_{\s(j)}}(t))+i\t (y_{l_j}(t)+y_{l_{\s(j)}}(t))\right)
    \\
    & \quad\quad \times \1\{ x_{l_{\s(j)}}(t),x_{l_j}(t)<2\s t+A\sqrt{t}, \forall s \in [r,t] \colon x_{l_{\s(j)}}(s), x_{l_j}(s)\leq2\s s+s^{\gamma}\}
     \\
     &
     \quad\quad \times \eee^{-t(1+2\s^2 )-\t^2(1-\rho^2)\left(t-d(x_{l_1},x_{l_{\s(1)}})\right)} \eee^{(\s +i\tau\rho)x_{l_1}(t)+(\s -i\tau\rho)x_{l_{\s(1)}}}
     \\
     &
     \quad\quad\times \1\{x_{l_{\s(1)}}(t),x_{l_1}(t)<2\s t+A\sqrt{t}, \forall s \in [r,t] \colon x_{l_{\s(1)}}(s),x_{l_1}(s)\leq2\s s+s^{\gamma}\}
    \Big]
    .
    \end{aligned} 
    \ee
     We decompose 
     \be\Eq(ny.6)
     \begin{aligned}
     x_{l_{\s(1)}}(t)= x_{l_1}d(x_{l_1},x_{l_{\s(1)}}) + x^{(1)}(t-d(x_{l_1},x_{l_{\s(1)}})); 
     \\
      x_{l_1}(t)= x_{l_1}d(x_{l_1},x_{l_{\s(1)}}) + x^{(2)}(t-d(x_{l_1},x_{l_{\s(1)}})),
      \end{aligned}
     \ee
     where $x^{(1)},x^{(2)}$ are two independent $\mathcal{N}(0,t-d(x_{l_1},x_{l_{\s(1)}})) $-distributed random variables. By Step one of our matching procedure, we can plug \eqv(ny.5) into \eqv(ny.6) and compute the expectation with respect to $x^{(1)}$ and $x^{(2)}$, we obtain that \eqv(ny.5) is bounded from above by\footnote{A corresponding lower bound also holds due to the second moment computation in Lemma \thv(Lem.const3).} 
     \be \Eq(ny.7)
     \begin{aligned}
    &\E\Big[\sum_{l_2,\dots, l_{k}\leq n(t)}\prod_{j=2}^{k} \eee^{-t(1+2\s^2 )} \eee^{\s \left(x_{l_j}(t)+x_{l_{\s(j)}}(t)\right)+i\t \left(y_{l_j}(t)+y_{l_{\s(j)}}(t)\right)}
    \\
    & \quad \times \1\{ x_{l_{\s(j)}}(t),x_{l_j}(t)<2\s t+A\sqrt{t}, \forall s \in [r,t] \colon x_{l_{\s(j)}}(s), x_{l_j}(s)\leq2\s s+s^{\gamma}\}
    \\
    &  \quad \times \eee^{-t(1+2\s^2 )-\t^2\left(t-d(x_{l_1},x_{l_{\s(1)}})\right)+\s^2 \left(t-d(x_{l_1},x_{l_{\s(1)}})\right)} \eee^{2\s x_{l_1}d(x_{l_1},x_{l_{\s(1)}})}
    \\
    & \quad \times
    \1\{  \forall s \in [r,d(x_{l_1},x_{l_{\s(1)}})] \colon x_{l_{\s(1)}}(s),x_{l_1}(s)\leq2\s s+s^{\gamma}\}
 \Big]
 .
      \end{aligned} 
    \ee
     We now introduce the event 
     \be\Eq(ny.8)
     \mathcal{A}_r= \left\{\exists s \in [r, d(x_{l_1},x_{l_{\s(1)}})], \exists j\in\{2,\dots,k,\s(2),\dots, \s(k)\} \colon d(x_{l_1}, x_{l_{j}}) =s\right\} 
     .
     \ee
    We can rewrite  \eqv(ny.7) as
    \be \Eq(ny.10)
    \E\left[\ldots \times \1_{\mathcal{A}_r}\right] + \E\left[\ldots \times \1_{\mathcal{A}_r^c}\right] =: J_{\mathcal{A}_r}+    J_{\mathcal{A}_r^c}.
       \ee
    We will prove that the first summand is of a smaller order than the second one.
  We need the following lemma.
  
  \begin{lemma}\TH(Lem.help)
  Let $x,y$ be $\mathcal{N}(0,q)$ distributed random variables. Then, for any $m_1,m_2\geq 1$ and constant $C>0$,
  \be\Eq(ny.11)
  \begin{aligned}
&\E\left[ \eee^{(m_1+2)\sigma x +i\tau m_2 x}  \1\{x<2\s q+ Cq^{\gamma}\}\right]
\\
& \quad 
\underset{q \to \infty}{=} o\left(\eee^{2\s q} \E\left[ \eee^{m_1\sigma x +i\tau m_2 x}  \1\{x<2\s q+ Cq^{\gamma}\}\right]\E\left[\eee^{2\sigma y}  \1\{y<2\s q+ Cq^{\gamma}\}\right]\right)
,
\end{aligned}
 \ee
 and similarly
  \be\Eq(ny.30)
  \begin{aligned}
    &\E\left[ \eee^{(m_1+1)\sigma x +i\tau (m_2+1) x}  \1\{x<2\s q+ Cq^{\gamma}\}\right]
    \\
    & \quad \underset{q \to \infty}{=} o\left(\eee^{2\s q} \E\left[ \eee^{m_1\sigma x +i\tau m_2 x}  \1\{x<2\s q+ Cq^{\gamma}\}\right]\E\left[ \eee^{(\sigma+i\tau) y}  \1\{y<2\s q+ Cq^{\gamma}\}\right]\right)
    .
  \end{aligned}
  \ee
  \end{lemma}
  \begin{proof}
  The l.h.s.\ in \eqv(ny.11) is equal to
  \be\Eq(ny.12)
  \int_{-\infty}^{2\s q+ Cq^{\gamma} } \frac{\dd x}{\sqrt{2\pi q}} ~ \eee^{(m_1+2)\sigma x +i\tau m_2 x} \eee^{-\frac{x^2}{2q}}.
  \ee
  Making a change of variable $y=(m_1+2)\s q + i\tau m_2 q+x$, we obtain that \eqref{ny.12} equals to
  \be\Eq(ny.13)
  \eee^{\left((m_1+2)\s  + i\tau m_2 \right)^2q/2}  \int_{-\infty}^{-m_1\s q- i\tau m_2 q + Cq^{\gamma} } \frac{\dd y}{\sqrt{2\pi q}} ~ \eee^{-y^2/2q}.
  \ee
  For $m_1 \geq 1$, by the Gaussian tail asymptotics, \eqref{ny.13} is bounded from above by
  \be \Eq(ny.14)
  \eee^{2m_1\s^2 q+2\s q+m_2^2\tau^2 q} \eee^{m_1\s Cq^{\gamma} }.
  \ee
  The expectation on the right hand side of \eqv(ny.11) is equal to 
  \be\Eq(ny.15)
  \int_{-\infty}^{2\s q+ Cq^{\gamma} } \dd x~\eee^{m_1\sigma x+i\tau m_2 x} \eee^{-\frac{x^2}{2q}}  
  \int_{-\infty}^{2\s q+ Cq^{\gamma} } \dd y~\eee^{2\sigma y}  \eee^{-\frac{y^2}{2q}} 
  .
  \ee
If $m_1>2$, \eqref{ny.15} is asymptotically equal to 
  \be\Eq(ny.16)
  \frac{1}{\sqrt{2\pi (m_1-2)q}}\eee^{2m_1\s^2 q-2\s^2 q+m_2^2\tau^2 q}\eee^{2\s^2 q}\eee^{(m_1-2)\s Cq^{\gamma} }
  .
  \ee
Comparing \eqv(ny.16) with \eqv(ny.14) yields the claim of Lemma~\eqv(ny.11). For $m_1=1$ or $m_2=1$, we bound the integral in \eqv(ny.15) by  $\eee^{(m_1\s +i\tau m_2)^2q/2+2\s^2q/2}$.

The proof of \eqv(ny.30) follows along the same lines. 
\end{proof}
We continue the proof of Lemma~\thv(Lem.const3). Consider $ J_{\mathcal{A}_r}$. 
Consider the skeleton generated by the leaves $l_1,l_{\s(1)}, \dots, l_k, l_{\s(k)}$ of the Galton-Watson tree. By $\mbox{path}(\cdot)$ we denote the unique path from a leave $\cdot$ to the root. To each edge in the Galton-Watson tree, we associate the following number
\be\Eq(ny.17)
m(e) := \sum_{j\in\{1,\s(1), \dots, k, \s(k)\}} \1_{e  \subset \mbox{path}(l_j)}.
\ee
  
\begin{figure}[tbph]
\centering
\includegraphics[width=0.7\linewidth]{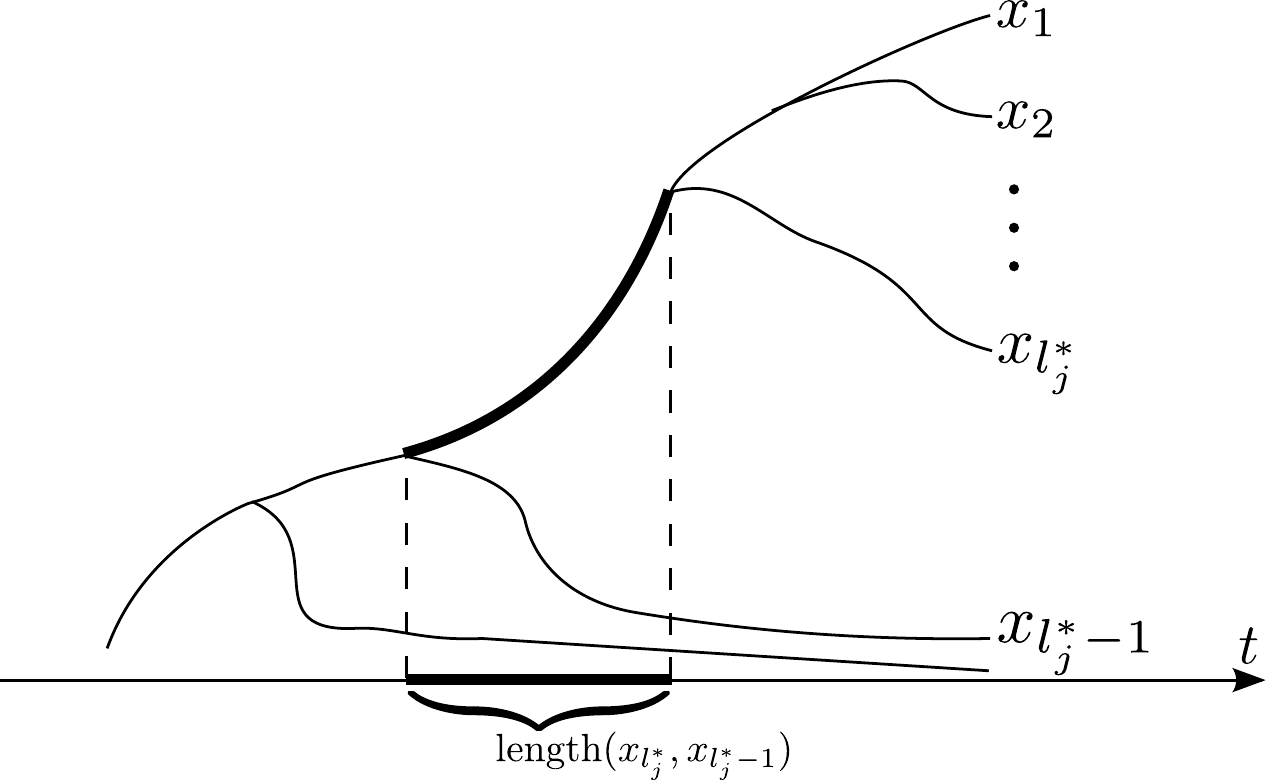}
\caption{Illustration of the notion of $\length(\cdot, \cdot)$ as defined in \ref{eq:length}}
\label{fig:length}
\end{figure}
For $k,j \in [n(t)]$, define (cf.~Fig.~\ref{fig:length})
\begin{align}\label{eq:length}
\length(x_k(t), x_j(t)) & :=  d(x_1(t), x_k(t))-d(x_1(t), x_j(t)), \quad t \in \R_+,
. 
\end{align}
Looking at the path of $x_{l_1}(t)$, the quantity $m(\cdot)$ for $e\subset
\mbox{path}(l_1)$ before time $d(x_{l_1},x_{l_{j^{*}}})$ where $l_{j*}$ satisfies the following conditions
\begin{itemize}
\item[(i)] $m$ is constant between $l_{j*}$ and $l_{j*-1}$ and the piece has length $>2r$.
\item[(ii)] $\sum_{i=1}^{{j*-1}} \length(x_{l_{i-1}},x_{l_{i}})< (\length (x_{l_{j^{*}-1}},x_{l_{j^{*}}}))^\gamma$,
where $\length$ is defined in the Fig.~\ref{fig:length}.
\end{itemize}
Such a $l_{j^{*}}$ exists for all $t>t_0(r)$ because there are
at most $2k-2$ points, where $m$ it is allowed to change. 
We call the value of $m$ on the path where $l_{j*}$ and $l_{j*-1}$ between $m^*$. $m^*$
corresponds to an time interval $[R, R+\ell]$, where
\begin{align}
\ell = \ell(j^*, t) :=  \length(x_{l_{j^{*}-1}}(t),x_{l_{j^{*}}}(t)).
\end{align}
Then, up to time $R$ the minimal
particle is a.s.~$>-\sqrt{2}R$. Hence,
\be \Eq(ny.19)
x_{l_{j*}}(R+\ell)- x_{l_{j*}}(R)<x_{l_{j*}}(R+\ell) +\sqrt{2}R
.
\ee
Since we compute an expectation conditional on  $x_{l_{j*}}(R+\ell) < 2 \s (R+\ell)+ (R+\ell)^\gamma$,
we obtain on this event
\be \Eq(ny.20)
x_{l_{j*}}(R+\ell)- x_{l_{j*}}(R)< 2 \s (R+\ell)+ (R+\ell)^\gamma+\sqrt{2} R.
\ee
Due to our choice of $j^*$, we have $2 \s R+\sqrt{2}R< C' (\ell)^\gamma$ for some positive constant $C'$.
By taking the expectation with respect to $x_{l_{j*}}(R+\ell)- x_{l_{j*}}(R)$ only, we can write extract from $J_{\mathcal{A}}$ the factor
\be \Eq(ny.21)
\E\left[ \eee^{(m^*\s+ i\tau m' )x_{l_{j*}}(R+\ell)- x_{l_{j*}}(R)}\1\{x_{l_{j*}}(R+\ell)- x_{l_{j*}}(R)< 2\s\ell+ (C'+1) (\ell)^\gamma\}\right].
\ee
By Lemma~\thv(Lem.help), \eqref{ny.21} is  
\be \Eq(ny.22)
\begin{aligned}
o\Big(\eee^{2\s \ell} & \E\left[\eee^{((m^*-2)\s+ i\tau m' )x_{l_{j*}}(R+\ell)- x_{l_{j*}}(R)}\1\{x_{l_{j*}}(R+\ell)- x_{l_{j*}}(R)< 2\s\ell+ (C'+1) (\ell)^\gamma\}\right]
\\
& \quad \times \E\left[ \eee^{2\s (x_{l_{j*}}(R+\ell)- x_{l_{j*}}(R))}\1\{x_{l_{j*}}(R+\ell)- x_{l_{j*}}(R)< 2\s\ell+ (C'+1) (\ell)^\gamma\}\right]\Big),
\end{aligned}
\ee
for $l$ large (which by Assumption~(i) on $l$ corresponds to $r$ large). Note that the quantity, inside the brackets in \eqv(ny.22), corresponds to the same expectation but where in the underlying tree $l_1, l_{\s_1}$ branched off before time $R$.

Iteratively, that leads to 
\be\Eq(ny.23)
J_{\mathcal{A}_r} \underset{t,r \to \infty}{=} o( J_{\mathcal{A}_r^c})
.
\ee
Since $k$ was chosen arbitrary, we know that the main contribution to the $2k$-th moment comes from the term where $l_1,\dots, l_k $ have split before time $r$ for $r$ large enough.
We condition on $\mathcal{F}_r$ and compute:
\be\Eq(ny.24)
\begin{aligned}
&\E\Big[\sum_{l_1,l_2,\dots, l_{k}\leq n(t)}\prod_{j=2}^{k} \eee^{-t(1+2\s^2 )} \eee^{\s (x_{l_j}(t)+x_{l_{\s(j)}})+i\t (y_{l_j}(t)+y_{l_{\s(j)}}(t))}
\\
& \quad \times \1\{ x_{l_{\s(j)}}(t),x_{l_j}(t)<2\s t+A\sqrt{t}, \forall s \in [r,t] \colon x_{l_{\s(j)}}, x_{l_j}\leq2\s s+s^{\gamma}\} 
\\
& \quad \times \1\{\sup_{j,j'\leq k}d(l_j,l_{j'})<r\} ~\Big|~ \mathcal{F}_r\Big]
\\
& =
\E\Big[\sum_{l_1,l_2,\dots, l_{k}\leq n(t)}\prod_{j=2}^{k}   b_{l_j}(r)\overline{b}_{l_{\s(j)} }(r) \E\left[\left(\left((N_{\s,\t}^{\gamma,A}(t-r)\right)^{(j)}\right)^{2}\right] ~\Big|~ \mathcal{F}_r\Big],
\end{aligned}
\ee
where $b_{l_j}(r)$ is defined in \eqv(def.b) and $\left(N_{\s,\t}^{\gamma,A}(t-r)\right)^{(j)}$ are i.i.d. copies of $\left(N_{\s,\t}^{\gamma,A}(t-r)\right)^{(j)}$.
By our second moment computations (Case $k=1$), as mentioned at the beginning of this proof,
\be \Eq(ny.25)
\lim_{t\to \infty} \E\left[\left(\left((N_{\s,\t}^{\gamma,A}(t-r)\right)^{(j)}\right)^{2}\right] = C_{2,A}.
\ee
Moreover, by invariance under permutation (in the labelling procedure),
\be\Eq(ny.26)
\sum_{l_1,l_2,\dots, l_{k}\leq n(t)}\prod_{j=2}^{k}   b_{l_j}(r)\overline{b}_{l_{\s(j)}} = k!\Big(\sum_{k=1}^{n(r)} \eee^{2\s x_{k}(r)-(1+2\s^2)r}\Big)^{k}
.
\ee
Observe that $\sum_{k=1}^{n(r)} \eee^{2\s x_{k}(r)-(1+2\s^2)r}=\mathcal{M}_{2\s,0}(r)$ which converges almost surely to $\mathcal{M}_{2\s,0}$. This proves\eqv(con.101).

The case $k'<k$ follows similarly. Take an optimal matching of the first $k'$
particles. The other particles will not be matched. Take one $l_1$ that has not
been matched. Along its path, we can again find the first macroscopic piece on
which $m(\cdot)$ is constant. Applying Lemma~\ref{Lem.help}, we get that the
contribution is the largest if
$\max_{j\in{1,\dots,k',1,\dots,k}}d(l_1,l_{j})<R$, for $R$ large enough. Observe
that,
 \be\Eq(ny.31)
 \E\Big[\sum_{k=1}^{n(t)}\eee^{\s x_k(t) +i\tau z_k(t)-(\frac{1}{2}+\s)t} ~\Big|~ \mathcal{F}_R\Big]=\sum_{k=1}^{n(R)}\eee^{\s x_k(R) +i\tau z_k(R)-(\frac{1}{2}+\s)t} \eee^{(\s^2-\t^2+1-2\s^2+i2\t\s)(t-R)/2}
 .
\ee
Since in $B_3$ it holds that $1-\s^2-\t^2<0$, the summands the r.h.s.~of \eqv(ny.31) converge to zero as $t\uparrow\infty$.
This together with the argument in the even case implies Lemma \thv(Lem.const3).
\end{proof}

\subsection{Proof of Theorem \thv(clt.B3)}
\label{sec:clt.B3}

\begin{proof}[Proof of Theorem~\thv(clt.B3)]

By Lemma \thv(Lem.const3), conditionally on $\mathcal{F}_r$, the moments of
$N_{\s,\t}^{c,A}(t)$ converge to the moments of a $\mathcal{N}(0,C_{2,A}\MM_{2\s,0})$ a.s.\ as $t\uparrow \infty$ and then $r\uparrow
\infty$. Since the
normal distribution is uniquely characterised by its moments, this implies
convergence in distribution.  Moreover, by Lemma~\thv(Lem.const1) and
Lemma~\thv(Lem.const2),
\be \Eq(con.501)
\wlim_{A\uparrow\infty} \wlim_{t\uparrow \infty} \mathcal{L}\left[N_{\s,\t}(t)-N_{\s,\t}^{c,A}(t) \right] = \delta_0, 
\ee
and $\lim_{A\to\infty} C_{2,A}=C_2$. The claim of Theorem~\thv(clt.B3) follows.
 \end{proof}

\section{The boundaries}
\label{sec:boundaries}

In this section, we provide the proofs of Theorems~\thv(TH.B1.2), \thv(CLT.2)
and Proposition~\ref{CLT.B} describing the limiting fluctuations of the
partition function on all boundaries between the phases, i.e., on the 1D
manifolds $B_{1,2} = \overline{B_1} \cap \overline{B_2}$, $B_{1,3} =
\overline{B_1} \cap \overline{B_3}$, and $B_{2,3} = \overline{B_2} \cap
\overline{B_3}$.

\subsection{The boundary between phases $B_1$ and $B_3$}
\label{sec:B13}

\begin{proof}[Proof of Theorem \thv(CLT.2)]
The proof of Theorem \thv(CLT.2) works as in phase $B_3$. Observe first that
\be\Eq(clt.2.2)
\E\left[\frac{\XX_{\b,\rho}(t)}{ \sqrt{t} \eee^{t(1/2+\s^2)}}\right]=\frac{1}{\sqrt{t}}.
\ee
Moreover, let
\be \Eq(clt.2.3)
\hat{N}_{\s,\t}(t) := t^{-1/2}N_{\s,\t}(t) \quad \mbox{and} \quad   \hat{N}_{\s,\t}^{c,A}(t):=t^{-1/2} N_{\s,\t}^{c,A}(t).
\ee
By Lemma \thv(Thm.secondmoment) (ii),
\be \Eq(clt.2.4)
 \lim_{t\uparrow \infty}\E\left[\vert \hat{N}_{\s,\t}(t)\vert^2\right]=C_3.
\ee
Now, we need the following.
\begin{lemma}\TH(Lem.const3')
 For $\b\in B_3$,
\be\Eq(con.200)
\lim_{t\to\infty}\E\left[\vert \hat{N}_{\s,\t}^{c,A}(t)\vert^2\right]=C_{3,A}
,
\ee
with $\lim_{A \uparrow \infty } C_{3,A}=C_3$
and, for $k\in \N$, we have
\be \Eq(con.201)
\lim_{A\uparrow\infty }\lim_{r\uparrow \infty}\lim_{t\to \infty} \E\left[\vert \hat{N}_{\s,\t}^{c,A}(t)\vert^{2k} ~\big\vert~ \mathcal{F}_r\right]=k! (C_3\M_{2\s,0})^k \quad \mbox{a.s. and in }L^1.
\ee
Moreover, for $k'<k$,
\be\Eq(con.202)
\lim_{A\uparrow \infty}\lim_{r\uparrow \infty}\lim_{t\to \infty} \E\left[\hat{N}_{\s,\t}^{c,A}(t)^k\overline{ N_{\s,\t}^{c,A}(t)}^{k'}~\big\vert~ \mathcal{F}_r\right] = 0 \quad \mbox{a.s. and in }L^1.
\ee
 
\end{lemma}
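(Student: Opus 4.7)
The plan is to mirror the proof of Lemma~\thv(Lem.const3), with the $t^{-1/2}$ rescaling in $\hat N_{\s,\t}^{c,A}$ compensating for the linear-in-$t$ growth now present in the second-moment integral on the boundary $B_{1,3}$, where $1-\s^2-\t^2=0$ instead of being negative.

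First, for $k=1$ the claim \eqv(con.200) reduces to Lemma~\thv(Thm.secondmoment)(ii). To add the truncation, I would adapt Lemmas~\thv(Lem.const1) and~\thv(Lem.const2) to the boundary: the only modification in the underlying second-moment computation is that the integral $\int_0^t \mathrm{d}q\,\eee^{(t-q)(1-\s^2-\t^2)}$ -- which converged to $(1-\s^2-\t^2)^{-1}$ in $B_3$ -- now equals $t$, but this is precisely absorbed by the $t^{-1}$ coming from $|\hat N^{c,A}|^2$. The remaining Brownian-bridge arguments are insensitive to the value of $\s^2+\t^2$ and carry over. Sending $A\uparrow\infty$ recovers $C_3$.

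For higher even moments I would run exactly the matching algorithm from the proof of Lemma~\thv(Lem.const3) on the $2k$ leaves, and apply Lemma~\thv(Lem.help) verbatim -- it is a pure Gaussian estimate, insensitive to the phase. Iterating shows that the dominant contribution to $\E[|N_{\s,\t}^{c,A}(t)|^{2k}\mid\FF_r]$ comes from configurations in which the $2k$ leaves split into $k$ disjoint cross-pairs before time $r$. Conditionally on $\FF_r$ these pairs are independent, and by the case $k=1$ each contributes $t\,C_{3,A}$ at the unscaled level. The combinatorial $k!$ arises from the number of matchings as in \eqv(ny.26), and the factor $\MM_{2\s,0}(r)\to\MM_{2\s,0}$ a.s.\ arises from the common-ancestor sum $\sum_{i\leq n(r)}\eee^{2\s x_i(r)-(1+2\s^2)r}$. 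Multiplying and dividing by $t^k$ from the rescaling yields $k!\,(C_{3,A}\MM_{2\s,0})^k$, and then $A\uparrow\infty$ gives \eqv(con.201).

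The main obstacle is the mixed moment \eqv(con.202). In the interior of $B_3$ one exploited $1-\s^2-\t^2<0$ via \eqv(ny.31) to make any unmatched leaf decay exponentially; on $B_{1,3}$ that exponent degenerates to the purely oscillating phase $\eee^{i\s\t(t-R)}$ of modulus one, so the $B_3$ argument does not transfer. The replacement is the extra $t^{-1/2}$ per factor of $\hat N$: for $k'<k$, any optimal matching pairs at most $k'$ indices into cross pairs ($N\bar N$, each contributing $t\,C_{3,A}$), while same-sign pairings $NN$ and $\bar N\bar N$ are exponentially suppressed by $\eee^{-2\t^2 q}$ through an analogue of \eqv(ny.11), using $\t^2>1/2$ on $B_{1,3}\cap\{|\s|<1/\sqrt 2\}$. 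The remaining $k-k'$ unmatched leaves each give $O(1)$ in modulus (by \eqv(ny.31) on the boundary), so the unscaled mixed moment is $O(t^{k'})$; against the global prefactor $t^{-(k+k')/2}$ this yields $O(t^{-(k-k')/2})\to 0$. Passing $A\uparrow\infty$ at the end concludes.
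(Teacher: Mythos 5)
Your proposal is correct and follows the same route as the paper, whose proof of this lemma is literally the single sentence that it is ``a rerun of the proof of Lemma~\thv(Lem.const3)'' with the $t^{-1/2}$ rescaling absorbing the linear growth of $\int_0^t\eee^{(t-q)(1-\s^2-\t^2)}\,\ddd q$ on $B_{1,3}$. In fact you go one step further than the paper: you correctly observe that the mixed-moment argument via \eqv(ny.31) degenerates on the boundary (the exponent $1-\s^2-\t^2$ vanishes, leaving a modulus-one oscillation), and your power-counting repair --- at most $k'$ cross pairs each of order $t$, same-sign pairs suppressed by $\eee^{-2\t^2 q}$, unmatched leaves $O(1)$, against the prefactor $t^{-(k+k')/2}$ --- is exactly the right fix for the step the paper's one-line proof glosses over.
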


\begin{proof}
The proof of Lemma~\thv(Lem.const3') is a rerun of the proof of Lemma~\thv(Lem.const3). 
\end{proof}

The claim of Theorem~\thv(CLT.2) follows with the very same arguments as the proof of Theorem~\thv(CLT.1).

\end{proof}

\subsection{Real critical point $\beta = \sqrt{2}$}

For $\vert \s\vert=1/\sqrt{2}$, the following scaling of the martingale $\mathcal{M}_{1,0}(t)$ plays an important role
\begin{equation}\Eq(bound.10)
\mathbb{M}^{\mathrm{SH}}_{1,0}(t) := \sqrt{t}\sum_{i=1}^{n(t)}\eee^{-\sqrt{2}(\sqrt{2}t-x_k(t))}
.
\end{equation}
$\mathbb{M}^{\mathrm{SH}}_{1,0}(t)$ is called \textit{critical additive martingale} and the rescaling appearing
in the r.h.s.\ of \eqref{bound.10} is referred to as \textit{Seneta-Heyde
scaling}. The limiting behaviour of $\mathbb{M}^{\mathrm{SH}}_{1,0}$ in the setting of branching
random walks has  been first analysed in \cite{AS14}. An alternative proof is
given in \cite{KM15}. As $t \to \infty$, \eqref{bound.10} converges in probability to a limiting random variable $\mathbb{M}^{\mathrm{SH}}_{1,0}$.

\begin{lemma}\TH(TH.real)
Denote $\M_{1,0}^{SH}(t):=
\sqrt{t}\sum_{i=1}^{n(t)}\eee^{-\sqrt{2}(\sqrt{2}-x_k(t))}$ and $\M_{1,0}^{SH}
:= \left(\frac{2}{\pi}\right)^{1/2}\mathcal{Z}$, where $\mathcal{Z}$ is the
limit of the derivative martingale, cf.~\eqref{real.2}. Then, for
$\beta=\sqrt{2}$, the following convergence holds in probability 
\be\Eq(real.1ak)
\M_{1,0}^{\mathrm{SH}}(t) \underset{t \to \infty}{\overset{\P}{\longrightarrow}}
\M_{1,0}^{\mathrm{SH}}
. 
\ee
\end{lemma}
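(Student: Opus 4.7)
The plan is to follow the strategy of Aïdékon--Shi \cite{AS14} (see also \cite{KM15}), adapting their branching random walk arguments to BBM. The proof rests on three ingredients: truncation of both sums to particles whose paths stay in a controlled region, a spine/many-to-one analysis, and comparison with the already-understood derivative martingale $\mathcal{Z}(t)$.

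First I would introduce, for a truncation parameter $K > 0$, the modified sums
\begin{equation*}
\M^{\mathrm{SH},K}_{1,0}(t) := \sqrt{t}\sum_{k=1}^{n(t)} \eee^{-\sqrt{2}(\sqrt{2}t-x_k(t))}\1\{\sqrt{2}s-x_k(s)\geq -K,\ \forall s \in [0,t]\},
\end{equation*}
and the analogously truncated $\mathcal{Z}^K(t)$. By the barrier arguments of Lalley--Sellke \cite{LS}, one has $\mathcal{Z}^K(t) \to \mathcal{Z}^K$ a.s.\ with $\mathcal{Z}^K \to \mathcal{Z}$ as $K\to\infty$ in probability. A parallel step is needed for $\M^{\mathrm{SH},K}_{1,0}(t)$: namely, that the mass removed by the truncation vanishes in probability as $K\to\infty$, uniformly in $t$.

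Second, via the many-to-one formula and a Girsanov change of measure removing the drift $\sqrt{2}$, the first moment of $\M^{\mathrm{SH},K}_{1,0}(t)$ reduces to $\sqrt{t}\,\P\{B_s \leq K,\ \forall s\in [0,t]\}$ for a standard Brownian motion $B$. The classical ballot/reflection asymptotics give $\P\{B_s \leq K,\ \forall s\in [0,t]\} \sim K\sqrt{2/(\pi t)}$, so $\E[\M^{\mathrm{SH},K}_{1,0}(t)] \to K\sqrt{2/\pi}$. The analogous computation for $\mathcal{Z}^K(t)$ produces the extra linear factor (from the ``derivative'' weight) and yields $\E[\mathcal{Z}^K(t)] \to K$. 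The ratio $\sqrt{2/\pi}$ thus emerges from the Brownian-meander geometry.

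Third, to upgrade these first-moment statements to convergence in probability of the appropriately normalised object, I would carry out a second-moment computation on $\M^{\mathrm{SH},K}_{1,0}(t) - \sqrt{2/\pi}\,\mathcal{Z}^K(t)$. Splitting the second moment along the common ancestor time $q$ of two particles (as in Section~\ref{sec:b3}), the contribution from recent splits is suppressed because the two descendants are essentially independent 3D Bessel-type paths in the Brownian scaling, whereas the contribution from ancient splits is governed by the derivative martingale itself. The cancellation between the two terms, combined with truncation, should give vanishing $L^2$-norm.

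The main obstacle will be that the Seneta-Heyde martingale is \emph{not} $L^2$-bounded, and even the truncated version $\M^{\mathrm{SH},K}_{1,0}(t)$ has second moment that grows logarithmically in $t$ unless one also imposes an upper barrier (e.g., $\sqrt{2}s - x_k(s) \leq s^{\gamma}$ for some $\gamma \in (1/2,1)$, in the spirit of Lemma~\thv(Lem.const2)). The delicate balance --- strong enough truncation to control second moments, weak enough to preserve the full limit --- is the technical core of \cite{AS14}, and handling it in the BBM setting requires the Brownian bridge estimates cited there. Once this is in place, passing $t\to\infty$ and then $K\to\infty$ yields the claimed convergence in probability with the universal constant $\sqrt{2/\pi}$.
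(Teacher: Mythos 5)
Your outline is essentially the paper's own proof: the paper disposes of this lemma in one line by invoking the Seneta--Heyde scaling result for branching random walks (Aïdékon--Shi \cite{AS14}, Kyprianou--Madaule \cite{KM15}) and noting that the argument adapts to BBM, and your sketch (barrier truncation, many-to-one reduction to the ballot asymptotics producing $\sqrt{2/\pi}$, second-moment control of the difference with the truncated derivative martingale) is precisely the content of that adaptation. Your identification of the non-$L^2$-boundedness and the need for an upper barrier as the technical core is accurate.
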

\begin{proof}
The proof is just an adaptation of the result for the branching random walk
(see \cite[Section~6.5]{KM15}).
\end{proof}

\subsection{The boundary between phases $B_2$ and $B_3$; and the triple point $\beta = (1+i)/\sqrt{2}$}
\label{sec:B_2-B_3-boundary}

In this section, we prove the convergence of the moments of the rescaled
partition function on the boundary between phases $B_2$ and $B_3$ to the moments
of a Gaussian random variable with random variance in probability  which is the
content of Theorem~\ref{CLT.B}.

\begin{proof}
[Proof of Theorem \thv(CLT.B)] \paragraph{(i)} The proof of Theorem~\thv(CLT.B)
(i) is a modification of the proof of Theorem \thv(CLT.1) (ii) in the following
way.

\begin{lemma}\TH(Lem.const3.3)
 For $\b$ with $\s=\frac{1}{\sqrt{2}}$, $\rho\in[-1,1]$ and binary branching
\be\Eq(con.300)
\lim_{t\to\infty}\E\left[\left\vert N_{\s,\t}^{c,A}(t)\right\vert^2\right]=C_{2,A}
,
\ee
and, for $k\in \N$, we have
\be \Eq(con.301)
\lim_{r\uparrow \infty}\lim_{t\to \infty} r^{\frac{2k}{4}}\E\left[\left\vert N_{\s,\t}^{c,A}(t)\right\vert^{2k} ~\vert~ \mathcal{F}_r\right]=k! (C_{2,A}\M^{\mathrm{SH}}_{1,0})^k \quad \text{ in probability,}
\ee
where $\M^{\mathrm{SH}}_{1,0}$ is the martingale defined in \eqref{real.1ak}.
Moreover, for $k'<k$,
\be\Eq(con.302)
\lim_{r\uparrow \infty}\lim_{t\to \infty} \E\left[N_{\s,\t}^{c,A}(t)^k\overline{ N_{\s,\t}^{c,A}(t)}^{k'}~\big\vert~ \mathcal{F}_r\right] =0\quad \mbox{in probability. }
\ee

\end{lemma}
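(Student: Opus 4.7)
The plan is to rerun the proof of Lemma \thv(Lem.const3) essentially verbatim, and to notice that the only place where $\b\in B_3$ was used substantially — namely the requirement $2\s^2<1$, which guaranteed an a.s.\ nondegenerate limit of $\MM_{2\s,0}(r)$ — is precisely the step that now has to be upgraded via the Seneta--Heyde scaling. All the other structural hypotheses ($\s^2+\t^2>1$ strictly, $\s\ne 0$, binary branching, control of the barrier event) remain valid on $B_{2,3}\setminus\{(1+i)/\sqrt{2}\}$, so the second-moment calculation and the greedy-matching argument will carry over without change. In particular, the finiteness of $C_{2,A}$ stated in \eqv(con.300) follows exactly as in Lemma \thv(Thm.secondmoment)(i), since that lemma was already proved for $\b\in B_{2,3}\setminus\{(1+i)/\sqrt{2}\}$.

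For the high-moment statement \eqv(con.301), I would expand $|N_{\s,\t}^{c,A}(t)|^{2k}$ as a sum over $2k$-tuples $(l_1,\dots,l_{2k})$, apply the greedy pair-matching $(l_j,l_{\s(j)})_{j=1}^{k}$ exactly as in \eqv(ny.2)--\eqv(ny.7), introduce the decomposition $J_{\AA_r}+J_{\AA_r^c}$ from \eqv(ny.10), and invoke Lemma \thv(Lem.help) to get $J_{\AA_r}=o(J_{\AA_r^c})$ as $t,r\to\infty$. Each step in that chain uses only (i) the exponential decay $e^{-(1-\rho^2)\t^2(t-q)}$ together with $\s^2+\t^2>1$ to integrate out the non-diagonal contributions, and (ii) the barrier controls of Lemmas \thv(Lem.const1) and \thv(Lem.const2), both of which survive at $\s=1/\sqrt{2}$. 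The main contribution therefore again factorises through \eqv(ny.26), yielding
\begin{equation*}
\E\!\left[|N_{\s,\t}^{c,A}(t)|^{2k}\;\big|\;\FF_r\right]
\;=\;k!\,C_{2,A}^{k}\,\MM_{2\s,0}(r)^{k}\;+\;o(1),\qquad t\to\infty.
\end{equation*}
Since $2\s=\sqrt{2}$ is the critical parameter, $\MM_{\sqrt{2},0}(r)$ has no nondegenerate limit; however Lemma \thv(TH.real) provides exactly the Seneta--Heyde rescaling $\sqrt{r}\,\MM_{\sqrt{2},0}(r)\xrightarrow{\P}\MM^{\mathrm{SH}}_{1,0}=(2/\pi)^{1/2}\ZZ$. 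Multiplying both sides of the above display by $r^{k/2}$ and letting $r\to\infty$ yields $k!\,(C_{2,A}\,\MM^{\mathrm{SH}}_{1,0})^{k}$ in probability, which is \eqv(con.301).

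For the unbalanced mixed moments \eqv(con.302), I would follow the very last paragraph of the proof of Lemma \thv(Lem.const3): after the greedy matching uses up the first $k'$ pairs, each of the $k-k'$ unpaired leaves $l_j$ contributes (conditionally on $\FF_R$) a factor proportional to $e^{(1-\s^2-\t^2)(t-R)/2}\cdot e^{i\t\s(t-R)}$ as in \eqv(ny.31). On $B_{2,3}\setminus\{(1+i)/\sqrt{2}\}$ we still have $1-\s^2-\t^2<0$ strictly, so each such factor vanishes as $t\to\infty$ and the whole unbalanced moment collapses to zero in probability.

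The main obstacle is bookkeeping of the mode of convergence. In Lemma \thv(Lem.const3) one had a.s.\ convergence because $\MM_{2\s,0}(r)$ was $L^2$-bounded, whereas here the Seneta--Heyde limit is only in probability; hence the statements \eqv(con.301)--\eqv(con.302) are necessarily weaker. Concretely, I must verify that the $o(1)$ error in $J_{\AA_r}/J_{\AA_r^c}$, after being multiplied by the polynomial prefactor $r^{k/2}$ and by the tight random mass $\MM_{\sqrt{2},0}(r)^{k}$, still goes to zero in probability. Since the bound in \eqv(ny.22) is exponentially small in the length $\ell$ of the macroscopic plateau and is independent of the polynomial factors in $r$, and since $\sqrt{r}\,\MM_{\sqrt{2},0}(r)$ is tight by Lemma \thv(TH.real), a standard Slutsky argument closes the gap.
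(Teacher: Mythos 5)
Your proposal is correct and takes essentially the same route as the paper: the paper's own proof of Lemma~\thv(Lem.const3.3) is a one-sentence remark that it is a rerun of Lemma~\thv(Lem.const3), with the sole changes being that $\MM_{\sqrt{2},0}(r)$ must be replaced by its Seneta--Heyde rescaling $\sqrt{r}\,\MM_{\sqrt{2},0}(r)\to\M^{\mathrm{SH}}_{1,0}$ (whence the $r^{2k/4}$ prefactor and the downgrade to convergence in probability), which is precisely what you identify and justify. Your additional checks --- that $\s^2+\t^2>1$ still holds strictly on $B_{2,3}\setminus\{(1+i)/\sqrt{2}\}$ and that the $o(1)$ errors survive multiplication by $r^{k/2}$ --- are more detail than the paper itself supplies.
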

\begin{proof}
The proof is a rerun of the proof of Lemma~\thv(Lem.const3) with the only
difference that the martingale $\M^{\mathrm{SH}}_{1,0}$ only converges in probability
towards $\M^{\mathrm{SH}}_{1,0}$ as $t\uparrow \infty$ and that there an additional
factor $r^{1/4}$ needed.
\end{proof}

Since \eqv(con.301) and \eqv(con.302) only hold in probability, using the same
method as in the proof of Theorem~\thv(CLT.1), we get the corresponding weak
convergence result. \paragraph{(ii)} For the triple point, the argument is
similar to \paragraph{(i)} but with the moments as given in
Lemma~\thv(Lem.const3') with $\M_{2\s,0}$ replaced by $\M^{\mathrm{SH}}_{1,0}$.
\end{proof}

\subsection{The boundary between phases $B_1$ and $B_2$}
\label{sec:B12}

In this section, we prove Theorem~\thv(TH.B1.2).
\begin{proof}[Proof of Theorem~\thv(TH.B1.2)]

For $\b\in \bar{B}_1\cap\bar{B}_2
\setminus\{\b=\sqrt{2},\b=\frac{1}{\sqrt{2}}(1+i)\}$, consider in the same way
as in the proof of Theorem \thv(TH.B1) the $\frac{\sqrt{2}}{\gamma}$-moment for
some $\gamma>\s$ and $\sqrt{2}\gamma>1$. Then, a rerun of the computation
starting from \eqv(m.1) up to \eqv(m.9) bounds the
$\frac{\sqrt{2}}{\gamma}$-moment from above by
     \be\Eq(m.10) 
      \begin{aligned}
      & \int_0^t \dd q~\eee^{\frac{(\s^2- \t^2+2)(t-q)-2t-(\s^2-\t^2)t}{\sqrt{2}\s} } \eee^{\frac {\gamma^2}{\s^2}q +q} 
      \\
      &= \int_0^t \dd q~\eee^{\frac{\left(\t^2-(\s-\sqrt{2})^2+\left(\frac {\gamma^2}{\s^2}-1\right)\right)q}{\sqrt{2}\s}}
      \\
      &= \int_0^t \dd q~\eee^{\frac{\left(\frac{\gamma^2}{\s^2}-1\right)q}{\sqrt{2}\s}} 
      ,
     \end{aligned}
     \ee
since $|\t|+|\s|=\sqrt{2}$. The r.h.s.~of~\eqv(m.10) is uniformly bounded by a
constant. Hence, $\MM_{\s,\t}(t)$ is in $L^p$ for some $p>1$. Hence, it
converges a.s.\ and in $L^1$. The limit is non-degenerate because
$\E\left[\MM_{\s,\t}(t)\right]=1$ and Theorem~\thv(TH.B1.2) follows.
\end{proof}

\section{Proof of Theorem~\ref{Cor:phase-diagram}}
\label{sec:phase-diagram}

In this section, as a consequence of the fluctuation results of the previous sections, we
derive the phase diagram shown on Fig.~\ref{fig-rem-phase-diagram}.

\begin{proof}[Proof of Theorem~\ref{Cor:phase-diagram}]
Convergence in probability for $\beta \in  B_1$ and $ B_3$   in
\eqref{eq:limiting-log-partition-function} follows from Theorems~\ref{TH.B1} and
\ref{CLT.1} (ii) by \cite[Lemma~3.9~(1)]{KaKli14}. Convergence for the glassy
phase $\beta \in \overline{B_2}$ was shown in \cite{HK15}. For the boundaries
between all three phases, the formula \eqref{eq:limiting-log-partition-function}
follows from the continuity of the limiting log-partition function.
\end{proof}

\bibliographystyle{plain}

\begin{thebibliography}{10}

\bibitem{AS14}
E.~Aïdékon and Zh. Shi.
\newblock The {S}eneta-{H}eyde scaling for the branching random walk.
\newblock {\em Ann. Probab.}, 42(3):959--993, 2014.

\bibitem{AM13}
G.~Alsmeyer and M.~Meiners.
\newblock Fixed points of the smoothing transform: two-sided solutions.
\newblock {\em Probab. Theory Related Fields}, 155(1-2):165--199, 2013.

\bibitem{ArguinBeliusBourgade2015}
L.-P. Arguin, D.~Belius, and P.~Bourgade.
\newblock Maximum of the characteristic polynomial of random unitary matrices.
\newblock {\em Preprint}, 2015.
\newblock Available at \url{http://arxiv.org/abs/1511.07399}.

\bibitem{ArguinBeliusHarper2015}
L.-P. Arguin, D.~Belius, and A.J. Harper.
\newblock {Maxima of a randomized Riemann Zeta function, and branching random
  walks}.
\newblock {\em Preprint}, 2015.
\newblock Available at \url{http://arxiv.org/abs/1506.00629}.

\bibitem{ABK_E}
L.-P. Arguin, A.~Bovier, and N.~Kistler.
\newblock The extremal process of branching {B}rownian motion.
\newblock {\em Probab.~Theory Relat.~Fields}, 157:535--574, 2013.

\bibitem{AN}
K.B. Athreya and P.E. Ney.
\newblock {\em Branching processes}.
\newblock Springer-Verlag, New York-Heidelberg, 1972.

\bibitem{BarralJinMandelbrot2010}
J.~Barral, X.~Jin, and B.~Mandelbrot.
\newblock Convergence of complex multiplicative cascades.
\newblock {\em Ann.~Appl.~Probab.}, 20(4):1219--1252, 2010.

\bibitem{BeliusKistler2014subleading}
D.~Belius and N.~Kistler.
\newblock The subleading order of two dimensional cover times.
\newblock {\em Probab.~Theory Relat.~Fields}, 167(1):461--552, 2017.

\bibitem{BisLou13}
M.~{Biskup} and O.~{Louidor}.
\newblock {Extreme local extrema of two-dimensional discrete Gaussian free
  field}.
\newblock {\em Preprint}, 2013.
\newblock Available at \url{https://arxiv.org/abs/1306.2602}.

\bibitem{BiskupLouidor2016full}
M.~Biskup and O.~Louidor.
\newblock {Full extremal process, cluster law and freezing for two-dimensional
  discrete Gaussian Free Field}.
\newblock {\em Preprint}, 2016.
\newblock Available at \url{https://arxiv.org/abs/1606.00510}.

\bibitem{Bovier2016}
A.~Bovier.
\newblock {\em Gaussian Processes on Trees: From Spin-Glasses to Branching
  Brownian Motion}.
\newblock Cambridge University Press, 2016.

\bibitem{BovHar13}
A.~{Bovier} and L.~{Hartung}.
\newblock {The extremal process of two-speed branching Brownian motion}.
\newblock {\em Electron. J. Probab.}, 19(18):1--28, 2014.

\bibitem{Derrida_REM3}
B.~{Derrida}.
\newblock {Random-Energy Model: Limit of a Family of Disordered Models}.
\newblock {\em Phys.~Rev.~Lett.}, 45:79--82, 1980.

\bibitem{Derrida_GREM}
B.~Derrida.
\newblock A generalization of the {R}andom {E}nergy {M}odel which includes
  correlations between energies.
\newblock {\em J. Physique Lett.}, 46(9):401--407, 1985.

\bibitem{Derrida_zeros}
B.~Derrida.
\newblock {The zeroes of the partition function of the random energy model}.
\newblock {\em Physica A: Stat.~Mech.~Appl.}, 177:31--37, 1991.

\bibitem{Dobrinevski2011interference}
A.~Dobrinevski, P.~Le~Doussal, and K.J. Wiese.
\newblock Interference in disordered systems: A particle in a complex random
  landscape.
\newblock {\em Phys. Rev. E}, 83(6):061116, 2011.

\bibitem{FyodorovHiaryKeating2012}
Y.V. Fyodorov, G.A. Hiary, and J.P. Keating.
\newblock Freezing transition, characteristic polynomials of random matrices,
  and the {R}iemann zeta function.
\newblock {\em Phys.~Rev.~Lett.}, 108(17):170601, 2012.

\bibitem{HK15}
L.~Hartung and A.~Klimovsky.
\newblock The glassy phase of the complex branching {B}rownian motion energy
  model.
\newblock {\em Electron.~Commun.~Probab.}, 20, 2015.

\bibitem{IK15}
A.~{Iksanov} and Z.~{Kabluchko}.
\newblock {A central limit theorem and a law of the iterated logarithm for the
  Biggins martingale of the supercritical branching random walk}.
\newblock {\em Preprint}, 2015.
\newblock Available at \url{https://arxiv.org/abs/1507.08458}.

\bibitem{IM15}
A.~Iksanov and M.~Meiners.
\newblock Fixed points of multivariate smoothing transforms with scalar
  weights.
\newblock {\em ALEA Lat. Am. J. Probab. Math. Stat.}, 12(1):69--114, 2015.

\bibitem{KaKli14}
Z.~Kabluchko and A.~Klimovsky.
\newblock Complex random energy model: zeros and fluctuations.
\newblock {\em Probab.~Theory Relat.~Fields}, 158(1-2):159--196, 2014.

\bibitem{KaKli14_G}
Z.~{Kabluchko} and A.~{Klimovsky}.
\newblock {Generalized random energy model at complex temperatures}.
\newblock {\em Preprint}, 2014.
\newblock Available at \url{http://arxiv.org/abs/1402.2142}.

\bibitem{Kistler2015}
N.~Kistler.
\newblock Derrida's random energy models. {F}rom spin glasses to the extremes
  of correlated random fields.
\newblock In V.~Gayrard and N.~Kistler, editors, {\em {Correlated Random
  Systems: Five Different Methods}}. Springer, 2015.

\bibitem{KoleskoMeiners2016}
K.~Kolesko and M.~Meiners.
\newblock Convergence of complex martingales in the branching random walk: {The
  boundary}.
\newblock {\em Preprint}, 2016.
\newblock Available at \url{https://arxiv.org/abs/1611.05220}.

\bibitem{KM15}
A.~E. {Kyprianou} and T.~{Madaule}.
\newblock {The Seneta-Heyde scaling for homogeneous fragmentations}.
\newblock {\em Preprint}, 2015.
\newblock Available at \url{https://arxiv.org/abs/1507.01559}.

\bibitem{LS}
S.~P. Lalley and T.~Sellke.
\newblock A conditional limit theorem for the frontier of a branching
  {B}rownian motion.
\newblock {\em Ann. Probab.}, 15(3):1052--1061, 1987.

\bibitem{LY52}
T.~D. Lee and C.~N. Yang.
\newblock {Statistical Theory of Equations of State and Phase Transitions. II.
  Lattice Gas and Ising Model}.
\newblock {\em Phys.~Rev.}, 87:410--419, 1952.

\bibitem{MeMe16}
M.~Meiners and S.~Mentemeier.
\newblock Solutions to complex smoothing equations.
\newblock {\em Probab.~Theory Relat.~Fields}, pages 1--70, 2016.

\bibitem{PanchenkoBook2013}
D.~Panchenko.
\newblock {\em {The Sherrington-Kirkpatrick model}}.
\newblock Springer, 2013.

\bibitem{RhodesVargas2014ChaosReview}
R.~Rhodes and V.~Vargas.
\newblock Gaussian multiplicative chaos and applications: a review.
\newblock {\em Probability Surveys}, 11, 2014.

\bibitem{ZeitouniLN}
O.~Zeitouni.
\newblock Branching random walks and {G}aussian free fields.
\newblock Lecture notes, 2013.

\end{thebibliography}

\end{document}